 \titleformat{\section}{\large\bfseries}{\thesection}{1em}{}
 \titleformat{\subsection}{\normalsize\bfseries}{\thesubsection}{1em}{}
 \titleformat{\subsubsection}{\normalsize\it}{{\rm \thesubsubsection}}{1em}{\vspace{-.7em}}
 \newtheoremstyle{mytheo}
                {1ex}
                {0.4ex}
                {\rm}
                {}
                {\bfseries\scshape }
                {.}
                {0.5ex}
                {}
 \theoremstyle{mytheo}
 \newtheorem{theo}{Theorem}[section]
 \newtheorem{cor}{Corollary}[section]
 \newtheorem{defi}{Definition}[section]
 \newtheorem{exam}{Example}[section]
 \newtheorem{lem}{Lemma}[section]
 \newtheorem{prop}{Proposition}[section]
 \newtheorem{rem}{Remark}[section]
 \numberwithin{equation}{section}
 \newcommand{\be}{\begin{equation}}
 \newcommand{\ee}{\end{equation}}
 \newcommand{\E}{\operatorname{\mathds{E}}}
 \renewcommand{\Pr}{\operatorname{\mathds{P}}}
  \newcommand{\RR}{\mathds{R}}
 \title{\Large\bf Sequences of
 expected
 record values\textcolor[rgb]{0,0,1}{\footnote{Work partially supported
 by the National and Kapodistrian University of
 Athens' Research fund under Grant
 70/4/5637.}}\vspace*{-.6em}}
 \author{\large
 Nickos
 Papadatos
 {\footnote{
 {e-mail:}\
 \textcolor[rgb]{0.98,0.00,0.00}{npapadat@math.uoa.gr}, {url:}\
 \textcolor[rgb]{0.98,0.00,0.00}{users.uoa.gr/$\sim$npapadat/}}}}
 \date{\small\it
 \begin{tabular}{r@{\hspace{0ex}}l}
 & National and Kapodistrian University of Athens, Department of Mathematics,
 \\
 [-.3ex]
 &
 Section of Statistics and Operational
 Research,
 Panepistemiopolis, 157 84 Athens, Greece.
 \end{tabular}
 }
\begin{document}

 \maketitle
 \vspace*{-2em}

 \thispagestyle{empty}

 \begin{abstract}
 \noindent
 We investigate conditions in order to decide whether a given
 sequence of real numbers represents expected record values
 arising from an independent, identically distributed, sequence
 of random variables.
 The main result provides a necessary and sufficient condition,
 relating any expected record sequence with the Stieltjes moment problem.
 The results are proved by means of a useful transformation on
 random variables. Some properties of this mapping, and its inverse,
 are discussed in detail, and, under mild conditions,
 an explicit inversion formula for the
 random variable that admits a given expected record sequence
 is obtained.
 \end{abstract}
 {\footnotesize {\it MSC}:  Primary 60E05; 62G30; Secondary 44A60.
 \newline
 {\it Key words and phrases}: characterizations; expected record values;
 \vspace*{-.7ex}
 Stieltjes moment problem; transformation of random variables; inversion formula.
 }
 \vspace*{-1em}

 \section{Introduction}
 \setcounter{equation}{0}
 \label{sec.1}

 Let $X$ be a random variable (r.v.)\
 with distribution function (d.f.)\ $F$,
 and suppose that $X_1,X_2,\ldots$
 is an independent, identically distributed (i.i.d.)\
 sequence from $F$.
 The usual record times, $T_n$, and (upper) record values,
 $R_n$, corresponding
 to the i.i.d.\ sequence $X_1,X_2,\ldots$, are defined by
 $T_1=1$, $R_1=X_1$, and, inductively, by
 \be
 \label{1}
 T_{n+1}=\inf\Big\{m>T_n: X_{m}>R_n\Big\},
 \ \ \
 R_{n+1}=X_{T_{n+1}} \ \ \  (n=1,2,\ldots).
 \ee
 It is obvious that \eqref{1} produces an infinite sequence of records
 ($=$ record values) if and only if $F$ has not an atom at its upper end-point
 (if finite).
 In a similar manner, one can define the so called {\it weak} (upper) records,
 $W_n$, by
 $\widetilde{T}_1=1$, $W_1=X_1$, and
 \be
 \label{2}
 \widetilde{T}_{n+1}=\min\Big\{m>\widetilde{T}_{n}:
 X_{m}\geq W_{n}\Big\},
 \ \ \
 W_{n+1}=X_{\widetilde{T}_{n+1}} \ \ \  (n=1,2,\ldots);
 \ee
 clearly, the sequence $W_n$ in \eqref{2} is non-terminating for every d.f.\
 $F$.

 These models have been studied extensively in the literature.
 The interested reader is referred to the books by Ahsanullah (1995),
 Arnold {\it et al.}\
 (1998) and Nevzorov (2001). Moreover,
 several characterization results based on the regressions
 of (weak or ordinary) record values are given in a number of papers,
 including Nagaraja (1977, 1988), Korwar (1984),
 Stepanov (1993), Aliev (1998),
 Dembi\'{n}ska and Wesolowski (2000),
 Lopez-Blazquez and Wesolowski (2001), Raqab (2002),
 Danielak and Dembi\'{n}ska (2007) and Yanev (2012).

 Clearly, the record processes \eqref{1}
 and \eqref{2} coincide with probability (w.p.)\ 1
 whenever $F$ is continuous (i.e., free of atoms). In that case,
 the record process $(R_1,R_2\ldots)$ has the same distribution
 as the sequence
 \be
 \label{3}
 \Big(F^{-1}(U_1),F^{-1}(U_2),\ldots\Big),
 \ee
 where $U_1<U_2<\cdots$ is the record process from the standard uniform d.f.,
 $U(0,1)$, and $F^{-1}(x)=\inf\{x\in\RR:F(x)\geq u\}$, $0<u<1$, is
 the left-continuous inverse d.f.\ of $F$.
 It should be noted, however, that the records, as defined by \eqref{3},
 are neither weak nor ordinary records (when $F$ is arbitrary).
 To illustrate the situation, consider the case where $F$ is symmetric
 Bernoulli, $b(1/2)$, that is,
 $X=0$ or $1$ w.p.\ $1/2$. Then,
 \[
 F^{-1}(u)=\left\{
 \begin{array}{ll}
 0, & 0<u\leq 1/2,
 \\
 1, & 1/2<u<1.
 \end{array}
 \right.
 \]
 The following table provides a realization of the corresponding
 i.i.d.\ and record processes.

 \begin{center}
 {\bf Table 1.}
 {
  \footnotesize
 \
 \\
 \hspace*{-2ex}
 \begin{tabular}{lccccccccc}
 \hline
 \hline
 Random \vspace{-1ex} mechanism \\
 producing i.i.d.\ from $U(0,1)$
 & 0.13 & 0.32 & 0.01 & 0.44 & 0.57 & 0.52 & 0.64 & 0.12 & $\ldots$
 \\
 \hline
 Uniform
 records
 $U_n$
 & 0.13 & 0.32 & $*$ & 0.44 & 0.57 & $*$ & 0.64 & $*$ & $\ldots$
 \\
 \hline
 Records
 $F^{-1}(U_n)$ \mbox{-- see \eqref{3}}
 & 0 & 0 & $*$ & 0 & 1 & $*$ & 1 & $*$ & $\ldots$
 \\
 \hline
 i.i.d.\ observations from $b(1/2)$
 & 0 & 0 & 0 & 0 & 1 & 1 & 1 & 0 & $\ldots$
 \\
 \hline
 Weak records $W_n$ \vspace{-1ex}
 from \\
 the i.i.d.\
 observations \mbox{-- see \eqref{2}}
 & 0 & 0 & 0 & 0 & 1 & 1 & 1 & $*$ & $\ldots$
 \\
 \hline
 Ordinary records $R_n$ \vspace{-1ex}
 from \\
 the i.i.d.\
 observations \mbox{-- see \eqref{1}}
 & 0 & $*$ & $*$ & $*$ & 1 & $*$ & $*$ & $*$ & $\ldots$
 \\
  \hline
 \end{tabular}
 \smallskip
 }
 \end{center}

 \noindent
 Table 1 shows that $W_2=F^{-1}(U_2)=0$ while $R_2=1$. Also,
 $W_4=0$ while $F^{-1}(U_4)=1$ (and $R_4$ is undefined); thus,
 $F^{-1}(U_n)$ is neither $R_n$ nor $W_n$ in general.

 From now on we shall constantly use
 the notation $R_n$ for $F^{-1}(U_n)$, where $\{U_n\}_{n=1}^\infty$
 is the sequence of uniform records -- the effect is not essential
 in applications, where it is customarily assumed that $F$ is absolutely
 continuous. Of course, the three notions of records coincide (w.p.\ 1)
 if and only if
 $F^{-1}(u)$ is strictly increasing in $u\in(0,1)$, and this is equivalent
 to the fact that $\Pr(X=x)=0$ for all $x$.

 The present work is concentrated on questions of the form
 \begin{center}
 {\it Does a given real sequence $\{\rho_n\}_{n=1}^{\infty}$ represents
 an expected
 \\
 record sequence (ERS) of some r.v.\ $X$?}
 \end{center}
 That is, can we find an r.v.\ $X$ such that $\E R_n=\rho_n$
 for all $n$, where the record process $R_n$ is defined by \eqref{3}?
 Moreover, if the answer is in the affirmative,
 is this r.v.\ unique?
 How can we re-construct it
 from its ERS? 

 One of the central results of the paper reads as follows.

 \begin{theo}
 \label{th.1}
 A real sequence $\{\rho_n\}_{n=1}^{\infty}$
 is an expected record sequence
 corresponding to a non-degenerate r.v.\ X
 if and only if
 \be
 \label{6}
 \frac{\rho_{n+2}-\rho_{n+1}}{\rho_2-\rho_1}=\frac{\E T^n}{(n+1)!}, \ \
 n=0,1,\ldots,
 \ee
 for some
 r.v.\ $T$, with $\Pr(T>0)=1$,
 possessing finite moments of any order.
 \end{theo}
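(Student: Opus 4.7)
The plan is to translate the question about expected records into an integral identity for the function
$$g(v):=F^{-1}(1-e^{-v}), \qquad v>0,$$
and then to read \eqref{6} off as a relation between the Lebesgue--Stieltjes measure $\mathrm{d}g$ and the moments of $T$. Since the uniform record $U_n$ satisfies $-\log(1-U_n)\sim \mathrm{Gamma}(n,1)$, the ERS acquires the explicit form
$$\rho_n \;=\; \E\!\left[g\bigl(-\log(1-U_n)\bigr)\right] \;=\; \frac{1}{(n-1)!}\int_0^\infty g(v)\,v^{n-1}e^{-v}\, \mathrm{d} v,$$
so that $g$ becomes the single unknown object.

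The pivotal step is the elementary identity
$$\frac{v^n e^{-v}}{n!} - \frac{v^{n+1}e^{-v}}{(n+1)!} \;=\; \frac{\mathrm{d}}{\mathrm{d} v}\!\left[\frac{v^{n+1}e^{-v}}{(n+1)!}\right],$$
which, combined with a Riemann--Stieltjes integration by parts against the monotone $g$, produces the master formula
$$\rho_{n+2}-\rho_{n+1} \;=\; \frac{1}{(n+1)!}\int_0^\infty v^{n+1}e^{-v}\, \mathrm{d} g(v),\qquad n\geq 0.$$
The vanishing of the boundary terms can be settled briefly: monotonicity of $g$ and local integrability of $g(v)e^{-v}$ give $v g(v)\to 0$ as $v\to 0^+$, while the window estimate $g(v)\int_v^{2v} u^N e^{-u}\, \mathrm{d} u \leq N!\,\rho_{N+1}$ bounds $g(v)$ by a multiple of $e^v/v^N$ at infinity, for any $N$.

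For necessity, the master formula says that $\mathrm{d}\mu(v):=v e^{-v}\,\mathrm{d}g(v)$ is a finite non-negative measure on $(0,\infty)$ with total mass $\rho_2-\rho_1$, strictly positive by non-degeneracy of $X$; the law $\nu$ of $T$ in \eqref{6} is then precisely the normalisation $\mathrm{d}\nu:=(\rho_2-\rho_1)^{-1}\mathrm{d}\mu$, and its moments are finite because each $\rho_n$ is. For sufficiency, I would invert this construction: from $\nu$ define the locally finite positive measure $\mathrm{d}g(v):=(\rho_2-\rho_1)\,v^{-1}e^v\,\mathrm{d}\nu(v)$ on $(0,\infty)$, integrate to recover $g$ up to an additive constant, verify by a Fubini argument (using $(e^t-1)/t \leq e-1$ on $(0,1]$ and $\int_t^\infty e^{-v}\mathrm{d}v=e^{-t}$) that $g(v)e^{-v}$ is integrable on $(0,\infty)$, and pin the constant so that $\int_0^\infty g(v)e^{-v}\,\mathrm{d}v = \rho_1$. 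The master formula applied to the resulting $F(x):=1-\exp(-g^{-1}(x))$ then matches every increment $\rho_{n+2}-\rho_{n+1}$, and induction starting from $\rho_1$ yields $\E R_n = \rho_n$ for all $n\geq 1$.

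The main technical obstacle sits at the two endpoints $v=0$ and $v=\infty$. In the necessity direction it is the uniform-in-$n$ control of the boundary terms in the integration by parts, which forces the window estimate above. In the sufficiency direction it is the singular factor $v^{-1}$ in $\mathrm{d}g$ at the origin: finiteness of all moments of $T$ alone would not suffice on its own, and one must use both that $\nu$ lives on the open half-line $(0,\infty)$ and that the $v^{-1}$ singularity is compensated upon integrating $g$ against $e^{-v}$. This is exactly the place where the Stieltjes (rather than Hamburger) character of the moment problem intervenes, accounting for the appearance of $\E T^n$ with $\Pr(T>0)=1$ in \eqref{6}.
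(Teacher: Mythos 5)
Your proposal is correct in substance and arrives at the same pivot as the paper: both directions rest on identifying $(n+1)!\,(\rho_{n+2}-\rho_{n+1})/(\rho_2-\rho_1)$ with the $n$-th moment of the normalisation of the measure $v e^{-v}\, dg(v)$ on $(0,\infty)$, where $g(v)=F^{-1}(1-e^{-v})$ is the paper's $H$. The execution differs on both sides, though. For necessity the paper never integrates by parts: it writes $\rho_{n+1}-\rho_n=\int (F_n-F_{n+1})\,dx=\frac{1}{n!}\int (1-F)L(F)^n\,dx$ directly from the explicit record d.f.\ \eqref{5}, and realises $T$ as $L(F(V))$ for an auxiliary absolutely continuous r.v.\ $V$ with density proportional to $(1-F)L(F)$; this sidesteps entirely the boundary-term analysis your window estimate is designed to control (your law of $T$ and the paper's coincide, being related by the substitution $x=g(v)$). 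For sufficiency the paper insists on an inversion formula \eqref{6.1} expressed through the d.f.\ $F_T$ rather than the measure $dF_T$, which is what necessitates the long Appendix \ref{app.d} (Lemmas \ref{lem.1}--\ref{lem.6}); your prescription $dg=(\rho_2-\rho_1)v^{-1}e^{v}\,d\nu$ is the Lebesgue--Stieltjes form of Corollary \ref{cor.th.2} made valid for arbitrary $T\in{\cal T}$, and it renders the monotonicity of $g$ trivial. The trade-off is clear: your route is shorter if the only goal is Theorem \ref{th.1}, while the paper's yields the explicit $F_T$-based inversion formula as a by-product. Two points to tighten. First, in the sufficiency direction you must verify $\int_0^\infty v^{m}e^{-v}|g(v)|\,dv<\infty$ for \emph{every} $m\geq 0$, not just $m=0$, since otherwise the higher expected records of the constructed $X$ need not exist; the same Tonelli computation goes through, with $\int_u^\infty v^m e^{-v}\,dv= m!\,e^{-u}\sum_{j=0}^{m}u^j/j!$ showing that precisely the finiteness of all moments of $T$ is consumed (this is the content of the paper's Lemma \ref{lem.4}). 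Second, in the window estimate the upper bound should be $N!\,\E|R_{N+1}|$ rather than $N!\,\rho_{N+1}$ wherever $g$ still takes negative values; this is harmless since you only need the estimate near $+\infty$, where $g\geq 0$.
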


 Characterizations of the parent
 distribution through its expected records (under mild additional
 assumptions like continuity and finite moment of order greater than one)
 are present in the bibliography for a long time, the most relevant being those
 given
 by Kirmani and Beg (1984) and Lin
 (1987); see also Lin and Huang (1987).
 However, these authors do not provide an explicit
 connection to the Stieltjes moment problem. In the contrary, the
 corresponding theory for an
 {\it expected maxima sequence}, EMS,
 $\mu_n=\E \max\{X_1,\ldots,X_n\}$,
 is well-understood from Kadane (1971, 1974). Namely, Kadane showed that
 $\{\mu_n\}_{n=1}^{\infty}$ represents an EMS (of a non-degenerate,
 integrable,
 parent population) if and only if there exists a random variable
 $T$, with $\Pr(0<T<1)=1$, such that
 \be
 \label{3b}
 \frac{\mu_{n+2}-\mu_{n+1}}{\mu_2-\mu_1}=\E T^n, \
 \ n=0,1,\ldots \
 .
 \ee
 The representation \eqref{3b} is
 closely connected to the Hausdorff (1921) moment problem,
 and improves on Hoeffding's (1953)
 characterization.
 The above kind of results enable further applications
 in the theory of maxima and order statistics, see, e.g.,
 Hill and Spruill (1994, 2000), Huang (1998), Kolodynski
 (2000). Moreover, the r.v.\
 $T$ in \eqref{3b}, (the distribution of) which is clearly unique,
 admits the
 representation
 $T=F(V)$ where $F$ is the parent d.f.\ and
 $V$ has density $f_V(x)=F(x)(1-F(x))\Big/\int_\RR F(y)(1-F(y))dy$
 -- cf.\ Papadatos (2017). Conversely, the
 parent distribution is characterized from the
 sequence $\{\mu_n\}_{n=1}^{\infty}$,
 and its location-scale family from
 $T$.

 In the case of a record process we would like to
 verify similar results, guaranteing that the theory of maxima
 can be suitably adapted to that of records. However, there are essential
 differences between these two models --
 see, e.g.,
 Resnick (1973, 1987), Nagaraja (1978), Tryfos and Blackmore (1985),
 Embrechts {\it et al.}\ (1997), Papadatos (2012) or
 Barakat {\it et al.}\ (2019); see also
 Appendix \ref{app.a}.
 In this spirit,
 \eqref{6} can be viewed as the natural
 record-analogue of \eqref{3b}.

 The results presented here are based on a suitable mapping
 $\varphi$ on the distribution of a random variable. Using $\varphi$,
 the location-scale family of any suitable $X$ is transformed
 to (the distribution of) a unique positive random variable $T$ with
 finite moments of any order.
 The mapping
 is one to one and onto (hence, invertible),
 and several properties of the expected record sequence of $X$
 are easily extracted from the behavior of $T=\varphi(X)$.
 The basic properties of the mapping $\varphi$ are discussed in
 Section \ref{sec.6}. Using them, we provide a
 complete description of the class of r.v.'s that are characterized
 from their expected record sequence -- see Theorem \ref{th.6.2}.
 Moreover, under mild assumptions,
 an inversion formula for the
 distribution function of the random variable that admits
 a given expected record sequence
 is obtained; see Theorem \ref{th.6.3}. The main
 results are
 presented in Section \ref{sec.6}, and the proofs
 together with some auxiliary lemmas
 are postponed to the appendices.

 Through the rest of the article, $X=Y$ for r.v.'s $X,Y$
 means that $X,Y$ are identically distributed, and inverse d.f.'s are
 always taken to be left-continuous,
 namely, $F^{-1}(u):=\inf\{x:F(x)\geq u\}$, $u\in(0,1)$.

 \section{The mapping $\varphi$ with applications to characterizations}
 \label{sec.6}
 \setcounter{equation}{0}
 For the investigation of the mapping $\varphi$ it is necessary to introduce
 two suitable spaces of r.v.'s.
 \begin{defi}
 \label{def.6.1}
 A function $H:(0,\infty)\to\RR$ belongs to
 ${\cal H}^*$ if it is non-constant, non-decreasing, left continuous, and
 satisfies
 \[
 \int_{0}^{\infty} y^m e^{-y} \big|H(y)\big| dy<\infty, \ \ m=0,1,\ldots \ \ .
 \]
 Furthermore, ${\cal H}_0:=\Big\{H\in {\cal H^*}:\int_{0}^\infty e^{-y} H(y) dy=0
 \ \mbox{and} \
 \int_{0}^\infty y e^{-y} H(y) dy=1\Big\}$. By definition, a random variable
 $X$ belongs to ${\cal H}^*$ if its inverse d.f.\ $G$ can be written as
 $G(u)=H(-\log(1-u))$, $0<u<1$, for some $H\in{\cal H}^*$. Similarly,
 $X_0\in{\cal H}_0$  if its inverse d.f.\ $G_0$ can be written as
 $G_0(u)=H_0(-\log(1-u))$, $0<u<1$, for some $H_0\in{\cal H}_0$.
 Here, identically distributed r.v's are considered as equal. Finally,
 ${\cal H}:={\cal H}^*\cup\{$the constant functions$\}$.
 \end{defi}
 Notice that $X\in{\cal H}^*$ if and only if $X$ is non-degenerate
 and the corresponding record process in \eqref{3} satisfies
 $\E|R_n|<\infty$ for all $n$ --
 see Proposition \ref{prop.1}.
 It is worth pointed out that every $X\in{\cal H}$ admits the
 equivalent representation $X=H({E})$, where the function
 $H$ belongs to ${\cal H}$ and ${E}$ is a standard exponential r.v.
 This says that a left-continuous, non-decreasing function
 $H$ belongs to ${\cal H}$ if and only if $\E|H(S_m)|<\infty$
 for all $m\geq 1$, where $S_m$ follows the Erlang distribution
 with parameters $m$ and $1$, i.e., $S_m$ is the sum of $m$ i.i.d.\ standard
 exponential r.v.'s. The subspace ${\cal H}_0$ consists
 of those $H\in{\cal H}$ for which $\E H(S_1)=0$, $\E H(S_2)=1$.
 \begin{defi}
 \label{def.6.2}
 The space ${\cal T}$ consists of all r.v.'s $T$,
 with $\Pr(T>0)=1$, possessing finite moments of any order, where identically
 distributed r.v.'s are considered as equal.
 We customarily write $F_T\in{\cal T}$
 in order
 to denote $T\in{\cal T}$, where
 $F_T$ is the d.f.\ of $T$.
 \end{defi}

 We are now ready to define the mapping $\varphi$ and its inverse
 $\varphi'$. What we shall prove in the sequel is that, essentially,
 the spaces ${\cal H}_0$ and ${\cal T}$ are identified
 through the restriction of $\varphi$ on ${\cal H}_0$.
 \begin{defi}
 \label{def.6.3}
 Set $L(u)=-\log(1-u)$, $0<u<1$.
 For $X\in{\cal H}^*$, $T=\varphi(X)\in{\cal T}$
 is  defined  to be the r.v.\
 $T=L(F(V))$, where $F$ is the d.f.\ of $X$
 and the r.v.\ $V$ has density (with respect to Lebesgue measure)
 given by $f_V(x)=(1-F(x))L(F(x))\Big/\int_{\RR} (1-F(y))L(F(y)) dy$, $x\in\RR$,
 and where $(1-F(x))L(F(x))=0$ if $F(x)=1$ or $0$.
 \end{defi}
 The mapping $\varphi$ is well-defined because $X\in{\cal H}^*$
 so that $f_V$ is integrable,  strictly positive in
 the non-empty interval $\{x: 0<F(x)<1\}$, and zero otherwise.

 \begin{defi}
 \label{def.6.4} For any $T\in{\cal T}$ with d.f.\
 $F_T$ we define $X_0=\varphi'(T)\in{\cal H}_0$
 to be the r.v.\ with inverse d.f.\ $G_0$, for which
 the function $H_0(y)=G_0(1-e^{-y})$, $y>0$, is given by
 the formula
 \be
 \label{6.1}
 H_0(y):=\frac{e^y}{y} F_T(y-)-\int_{1}^{y} \frac{x-1}{x^2} e^x F_T(x) dx
 -c_T,
 \ \ \ y>0.
 \ee
 In this formula, $F_T(y-)=\Pr(T<y)$, $\int_{1}^{y}dx:=\int_{y}^1-dx$ if
 $0<y<1$, and $c_T$ is the unique constant (depending only on $T$)
 for which $\int_0^\infty e^{-y}H_0(y)dy=0$.
 \end{defi}

 We shall prove in Lemma \ref{lem.cT} that
 \be
 \label{cT}
 c_T=\E\Big[\frac{1}{T}I(T> 1)\Big]
 +\E\Big[\frac{1+eT-e^T}{T}I(T\leq 1)\Big],
 \ee
 where $I$ denotes an indicator function.

 \begin{prop}
 \label{prop.phi}
 Both transformations $\varphi$ and $\varphi'$
 are well-defined, with domains ${\cal H}^*$ and ${\cal T}$, and
 ranges ${\cal T}$ and ${\cal H}_0$, respectively. Moreover,
 if $X_1\in{\cal H}^*$ and $X_2=c+\lambda X_1$, where
 $c\in\RR$ and $\lambda>0$,
 then $X_2\in{\cal H}^*$ and $\varphi(X_1)=\varphi(X_2)$.
 \end{prop}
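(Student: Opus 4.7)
The plan is to verify the three claims of the proposition separately: (i) $\varphi$ sends $\mathcal{H}^*$ into $\mathcal{T}$; (ii) $\varphi'$ sends $\mathcal{T}$ into $\mathcal{H}_0$; and (iii) $\varphi$ is invariant under affine changes of the underlying variable. I would dispose of (iii) first: if $X_2=c+\lambda X_1$ with $\lambda>0$, then $F_2(x)=F_1((x-c)/\lambda)$, so a direct change of variables gives $f_{V_2}(x)=\lambda^{-1}f_{V_1}((x-c)/\lambda)$ and hence $V_2\law c+\lambda V_1$. Consequently $F_2(V_2)\law F_1(V_1)$, and therefore $T_2=L(F_2(V_2))\law L(F_1(V_1))=T_1$.

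For (i), I would exploit the representation $X\law H(E)$, where $E$ is standard exponential and $H\in\mathcal{H}^*$ corresponds to the inverse d.f.\ of $X$ via $G(u)=H(L(u))$. The substitution $1-F(x)=e^{-y}$, $dx=dH(y)$, converts the normalizing constant of $f_V$ into $\int_0^\infty y e^{-y}\,dH(y)$, which by Riemann--Stieltjes integration by parts equals $\E H(S_2)-\E H(S_1)$, with $S_k$ an Erlang$(k,1)$ variable; the quantity is strictly positive (since $H$ is non-constant and non-decreasing) and finite (since $H\in\mathcal{H}^*$). An analogous computation yields $\E T^m=(m+1)!\,[\E H(S_{m+2})-\E H(S_{m+1})]/[\E H(S_2)-\E H(S_1)]<\infty$ for every $m\geq 0$; moreover $T>0$ a.s., because $V$ is supported on $\{0<F<1\}$, so that $L(F(V))>0$ a.s.

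Part (ii) is where the real work lies. The key observation is that integration by parts in~\eqref{6.1}, using $d(e^x/x)=\tfrac{(x-1)e^x}{x^2}dx$ together with $\int_1^y\tfrac{(x-1)e^x}{x^2}dx=\tfrac{e^y}{y}-e$, rewrites $H_0(y)=eF_T(1)-c_T+\int_{(1,y]}\tfrac{e^x}{x}dF_T(x)$ for $y>1$ (symmetrically for $y<1$), up to an atomic correction $-\tfrac{e^y}{y}\Pr(T=y)$ that enforces left-continuity at jumps of $F_T$. From this one reads off the Stieltjes differential $dH_0(y)=\tfrac{e^y}{y}\,dF_T(y)$, which gives in one stroke that $H_0$ is non-decreasing, left-continuous, and non-constant (since $T$ is a non-trivial law on $(0,\infty)$). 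For the integrability demanded by Definition~\ref{def.6.1}, Fubini yields
\[
\int_1^\infty y^m e^{-y}\!\int_{(1,y]}\tfrac{e^x}{x}dF_T(x)\,dy=\int_1^\infty \tfrac{e^x}{x}\Gamma(m+1,x)\,dF_T(x)\leq C_m\int_1^\infty x^{m-1}\,dF_T(x)<\infty,
\]
finite because $T\in\mathcal{T}$ has all moments; the regime $0<y<1$ is handled by the cancellation of $\tfrac{e^y}{y}F_T(y-)$ against the integral in~\eqref{6.1} as $y\downarrow 0$. With these estimates in hand, $c_T$ is uniquely determined by the constraint $\int_0^\infty e^{-y}H_0(y)dy=0$. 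Finally, the remaining normalization $\int_0^\infty y e^{-y}H_0(y)dy=1$ follows by integration by parts with antiderivative $-(y+1)e^{-y}$: the boundary term at $0$ produces $H_0(0+)$, while the Stieltjes identity $dH_0=\tfrac{e^y}{y}dF_T$ turns the bulk into $\int_0^\infty\tfrac{y+1}{y}dF_T(y)=1+\E[1/T]$, reducing the claim to $H_0(0+)=-\E[1/T]$, which is then checked against the explicit value of $c_T$ recorded in~\eqref{cT} (and justified separately in Lemma~\ref{lem.cT}).

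The principal obstacle is part (ii): establishing the integrability of $y^m e^{-y}|H_0|$ for all $m\geq 0$ and nailing down $c_T$ so that the second normalization holds. Both rest on delicate cancellations inside~\eqref{6.1}, where the pieces $\tfrac{e^y}{y}F_T(y-)$ and $\int_1^y\tfrac{(x-1)e^x}{x^2}F_T(x)dx$ are individually divergent as $y\to\infty$ and as $y\downarrow 0$, and only their difference is tame, thanks to the moment finiteness built into the space $\mathcal{T}$.
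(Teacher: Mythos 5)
Your parts (i) and (iii) are fine and essentially coincide with the paper's argument: the paper establishes (i) inside the proof of Theorem \ref{th1}(i) in Appendix \ref{app.c}, via $\rho_{n+1}-\rho_n=\frac{1}{n!}\int(1-F)L(F)^n\,dx$, and dismisses the affine invariance as trivial. For part (ii), your central device --- integrating by parts in \eqref{6.1} with $d(e^x/x)=\frac{(x-1)e^x}{x^2}dx$ to extract the Stieltjes identity $dH_0(y)=\frac{e^y}{y}\,dF_T(y)$ --- is correct and is genuinely slicker than the paper's route (Lemmas \ref{lem.1}--\ref{lem.6} of Appendix \ref{app.d}, which get monotonicity from the direct computation \eqref{mono} and the integrability and normalizations from repeated Tonelli interchanges). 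Your bound $\frac{e^x}{x}\Gamma(m+1,x)\le C_m x^{m-1}$ for $x\ge 1$ is exactly the estimate the paper uses for $J_2$ in Lemma \ref{lem.4}.

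However, your verification of the second normalization $\int_0^\infty ye^{-y}H_0(y)\,dy=1$ has a genuine hole. You reduce it to $1+\E[1/T]+H_0(0+)=1$, i.e.\ to the identity $H_0(0+)=-\E[1/T]$. But $\mathcal{T}$ imposes no condition on negative moments: for $T$ uniform on $(0,1)$ one has $T\in\mathcal{T}$ and $\E[1/T]=\infty$, and correspondingly $H_0(0+)=-\infty$, so your bookkeeping degenerates to $\infty-\infty$. The paper avoids this by never isolating $\E[1/T]$: in Lemma \ref{lem.6} the relevant combination $\frac{e^t}{t}\bigl[a_k(t)-k!e^{-t}\bigr]=k!\sum_{j=0}^{k-1}t^j/(j+1)!$ is bounded near $t=0$, the $1/t$ singularity cancelling before any integral is evaluated. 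The fix within your framework is to integrate $(y-1)e^{-y}H_0(y)$ by parts using the antiderivative $-ye^{-y}$ of $(y-1)e^{-y}$: the boundary term at $0$ is $\lim_{y\downarrow 0}ye^{-y}H_0(y)$, which vanishes by dominated convergence since $y\int_{[y,1]}x^{-1}e^x\,dF_T(x)\le e\int_{(0,1]}\min(y/x,1)\,dF_T(x)\to 0$, while the bulk becomes $\int_0^\infty ye^{-y}\,\frac{e^y}{y}\,dF_T(y)=1$; combined with $\int_0^\infty e^{-y}H_0\,dy=0$ this yields the claim without ever touching $\E[1/T]$. As written, though, your final step fails for legitimate members of $\mathcal{T}$.
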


 \begin{theo}
 \label{th.6.1}
 The transformation $\varphi:{\cal H}_0\to{\cal T}$
 of Definition {\rm \ref{def.6.3}}
 is one to one and onto, with inverse
 $\varphi^{-1}:{\cal T}\to{\cal H}_0$,
 where $\varphi^{-1}=\varphi'$ is given by
 Definition {\rm \ref{def.6.4}}.
 \end{theo}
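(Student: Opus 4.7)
The plan is to verify the two compositions $\varphi\circ\varphi'=\mathrm{id}_{\mathcal T}$ and $\varphi'\circ\varphi|_{\mathcal H_0}=\mathrm{id}_{\mathcal H_0}$, taking as given (from Proposition~\ref{prop.phi}) that $\varphi$ and $\varphi'$ are already well defined with ranges inside $\mathcal T$ and $\mathcal H_0$ respectively. The entire argument hinges on one identity: for $H\in\mathcal H_0$ with $X=H(E)$ (where $E$ is standard exponential) and $T=\varphi(X)$, the law of $T$ has Lebesgue--Stieltjes representation $dF_T(y)=y\,e^{-y}\,dH(y)$ on $(0,\infty)$.

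To prove this identity I would compute, for a test function $\psi$,
\[
\E\psi(T)=Z^{-1}\int_{\RR}\psi(L(F(x)))(1-F(x))L(F(x))\,dx,
\]
and change variable via $y=L(F(x))=-\log(1-F(x))$. Since $F(H(y))=1-e^{-y}$, this substitution sends $(1-F(x))L(F(x))$ to $y\,e^{-y}$ and the push-forward of Lebesgue measure $dx$ to the Stieltjes measure $dH(y)$. Hence $\E\psi(T)=Z^{-1}\int_0^\infty\psi(y)\,y\,e^{-y}\,dH(y)$, and Stieltjes integration by parts gives $Z=\int_0^\infty(y-1)e^{-y}H(y)\,dy=1-0=1$ thanks to the two normalization conditions that define $\mathcal H_0$.

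Next I would verify that the explicit formula in Definition~\ref{def.6.4} is exactly the left-continuous antiderivative of $(e^y/y)\,dF_T(y)$. Formal differentiation of $(e^y/y)F_T(y)$ produces $(e^y/y)F_T'(y)+((y-1)e^y/y^2)F_T(y)$, so subtracting the integral term in the formula kills the unwanted summand and leaves $dH_0/dy=(e^y/y)F_T'(y)$; replacing $F_T(y)$ by $F_T(y-)$ in the leading term correctly delivers the jump $(e^y/y)\Delta F_T(y)$ at each atom of $T$. The constant $c_T$ is merely an integration constant chosen to enforce $\int_0^\infty e^{-y}H_0(y)\,dy=0$. Consequently $dH_0(y)=(e^y/y)\,dF_T(y)$ as Stieltjes measures.

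Combining the two identities closes the loop. For $T\in\mathcal T$ and $H_0:=\varphi'(T)$, the measure attached to $\varphi(H_0(E))$ is $y\,e^{-y}\,dH_0(y)=dF_T(y)$, so $\varphi(\varphi'(T))=T$ in distribution. Conversely, for $H\in\mathcal H_0$ with $T:=\varphi(H(E))$, let $\widetilde H:=\varphi'(T)$; then $d\widetilde H=(e^y/y)\,dF_T=dH$, so $\widetilde H-H$ is a constant on $(0,\infty)$. Since both $\widetilde H$ and $H$ lie in $\mathcal H_0$, the normalization $\int_0^\infty e^{-y}H(y)\,dy=0$ selects a unique representative in each additive--constant class, forcing $\widetilde H=H$. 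The main technical obstacle is the Stieltjes calculus when $F$ has atoms or flat spots (so that $x\mapsto L(F(x))$ is singular and the push-forward requires care) and the justification of absolutely convergent boundary terms in the integration by parts; these delicate points are already absorbed by the range claim in Proposition~\ref{prop.phi}, which is precisely why the present theorem then reduces to the essentially algebraic identification of the two Stieltjes measures $dH$ and $dH_0$.
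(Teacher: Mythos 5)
Your argument is correct, and it reorganizes the proof around a cleaner skeleton than the paper's. The paper proceeds via two separate injectivity lemmas (Lemmas \ref{lem.6.2} and \ref{lem.6.3}, each an integral-equation/ODE uniqueness argument for the difference of two candidate solutions), and then verifies the single composition $\varphi'(\varphi(X))=X$ by an explicit Fubini computation starting from the integrated identity of Remark \ref{rem.6.1}, finally deducing $\varphi(\varphi'(T))=T$ from the injectivity of $\varphi'$. You instead isolate the two Lebesgue--Stieltjes identities $dF_{\varphi(X)}(y)=y e^{-y}\,dH(y)$ and $d\bigl(\varphi'(T)\bigr)(y)=(e^y/y)\,dF_T(y)$ --- which are precisely the differentiated forms of Remark \ref{rem.6.1} and of formula \eqref{6.1} --- and obtain \emph{both} compositions at once because the two densities $ye^{-y}$ and $e^y/y$ are reciprocal; no injectivity lemmas are needed. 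What your route buys is brevity and transparency about \emph{why} the map inverts; what the paper's route buys is that it never has to make rigorous the push-forward of $dx$ under the possibly singular map $x\mapsto L(F(x))$, since it works throughout with integrated, pointwise-defined quantities via the identity \eqref{6.3}. Two caveats: your remark that the delicate points are ``absorbed by Proposition \ref{prop.phi}'' overstates what that proposition gives you --- it supplies the integrability needed for your Fubini and boundary-term arguments, but not the push-forward identity itself, which you must still justify (e.g.\ via the standard fact that the image of Lebesgue measure on $\{0<F<1\}$ under $F$ is the Stieltjes measure of $F^{-1}$, or via \eqref{6.3} as the paper does); and the step ``$d\widetilde H=dH$ on $(0,\infty)$ implies $\widetilde H-H$ is constant'' should be stated for the left-continuous representatives, after which the normalization $\int_0^\infty e^{-y}H(y)\,dy=0$ indeed pins the constant to zero. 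Neither caveat is a gap in the mathematics, only in the level of detail.
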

 Theorem \ref{th.1} holds true, since it is an immediate
 corollary of the following result.
 \begin{theo}
 \label{th1}

 \noindent
 {\rm (i)} Given $X\in {\cal H}^*$ with expected record sequence
 $\{\rho_n\}_{n\geq 1}$,
 the r.v.\ $T=\varphi(X)\in{\cal T}$ satisfies \eqref{6},
 where the mapping $\varphi$ is given by Definition {\rm \ref{def.6.3}}.

 \noindent
 {\rm (ii)} Given $T\in {\cal T}$,
 the r.v.\ $X_0=\varphi^{-1}(T)\in{\cal H}_0$ has expected record sequence
 $\{\rho_n\}_{n\geq 1}$ that satisfies \eqref{6}
 with $\rho_1=0$, $\rho_2=1$,
 where the mapping $\varphi^{-1}=\varphi'$ is as in
 Definition {\rm \ref{def.6.4}}.
 \end{theo}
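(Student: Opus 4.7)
The plan is to reduce both assertions to a single moment computation performed through a change of variable on $(0,\infty)$. Using the representation discussed after Definition \ref{def.6.1}, write $X\law H({E})$ with $H\in{\cal H}^*$ and ${E}$ standard exponential. Since $-\log(1-U_n)\law S_n$ is Erlang$(n,1)$, formula \eqref{3} yields $R_n\law H(S_n)$, and hence
\[
 \rho_n=\int_0^\infty H(y)\,\frac{y^{n-1}e^{-y}}{(n-1)!}\,dy,
\]
finite by the integrability built into ${\cal H}^*$. The crucial identification is $L(F(y))=H^{\leftarrow}(y)$, with $H^{\leftarrow}$ the right-continuous inverse of $H$; this follows from $F(y)=\Pr(H({E})\le y)=1-\exp\{-H^{\leftarrow}(y)\}$, which also gives $1-F(y)=e^{-H^{\leftarrow}(y)}$.

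For part (i), unwinding Definition \ref{def.6.3} yields
\[
 \E T^n=\frac{\int_{\RR}(L(F(y)))^{n+1}(1-F(y))\,dy}{\int_{\RR}L(F(y))(1-F(y))\,dy}.
\]
The change-of-variable identity $\int_{\RR}\phi(H^{\leftarrow}(y))\,dy=\int_{(0,\infty)}\phi(z)\,dH(z)$, applied with $\phi(z)=z^{n+1}e^{-z}$, rewrites the numerator as $\int_0^\infty z^{n+1}e^{-z}\,dH(z)$. A Stieltjes integration by parts then gives
\[
 \int_0^\infty z^{n+1}e^{-z}\,dH(z)=\int_0^\infty H(z)\,z^n e^{-z}(z-n-1)\,dz=(n+1)!\,(\rho_{n+2}-\rho_{n+1}),
\]
and the case $n=0$ identifies the denominator as $\rho_2-\rho_1$. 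Dividing produces \eqref{6}.

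Part (ii) is then an immediate consequence of part (i) and Theorem \ref{th.6.1}: since $X_0=\varphi^{-1}(T)\in{\cal H}_0$ satisfies $\varphi(X_0)=T$, its expected record sequence obeys \eqref{6} by part (i). The normalisations $\rho_1=0$ and $\rho_2=1$ are nothing but the two integral conditions $\int_0^\infty e^{-y}H_0(y)\,dy=0$ and $\int_0^\infty y\,e^{-y}H_0(y)\,dy=1$ defining ${\cal H}_0$.

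The main technical obstacle is justifying both the change-of-variable formula and the vanishing of the integration-by-parts boundary terms under the mild regularity available in ${\cal H}^*$, where $H$ need not be absolutely continuous or strictly increasing. At infinity, the estimate $z^m e^{-z}|H(z)|\le C\int_z^{2z}y^m e^{-y}|H(y)|\,dy\to 0$ suffices, combining monotonicity of $H$ with the ${\cal H}^*$ integrability; at the origin, $z|H(z)|\le\int_0^z|H(y)|\,dy\to 0$ handles the boundary, again by monotonicity. I would isolate these bounds, together with the right-continuous inverse change-of-variable identity, as auxiliary lemmas in the appendix; once they are in hand, the computations above complete the proof.
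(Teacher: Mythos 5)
Part (i) of your proposal is essentially sound, though it takes a different route from the paper: the paper reads the identity $\rho_{n+2}-\rho_{n+1}=\frac{1}{(n+1)!}\int(1-F(x))L(F(x))^{n+1}dx$ directly off the explicit formula \eqref{5} for $F_n$ (since $F_n-F_{n+1}=(1-F)L(F)^n/n!$), whereas you obtain the same identity by passing to the quantile representation $X=H(E)$ and performing a Stieltjes integration by parts of $\int z^{n+1}e^{-z}\,dH(z)$. Your boundary estimates and the change-of-variable identity for the generalized inverse are correct and can be made rigorous; the paper's computation just avoids the need for them. Either way \eqref{6} follows with $T=L(F(V))$.

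Part (ii), however, has a genuine gap: it is circular. You invoke Theorem \ref{th.6.1} to assert that $X_0=\varphi^{-1}(T)\in{\cal H}_0$ and $\varphi(X_0)=T$, and then apply part (i). But Theorem \ref{th.6.1} is itself proved using Theorem \ref{th1}(ii) (the paper's proof of Theorem \ref{th.6.1} explicitly cites ``the sufficiency proof ... see Appendix \ref{app.d}''), and indeed any proof that $\varphi'$ inverts $\varphi$ must first establish that $\varphi'$ is well defined with range in ${\cal H}_0$. That is precisely the hard content of part (ii), and your proposal does not address it at all: one must show that the explicitly defined function $H_0$ of \eqref{6.1} is non-decreasing, that $\int_0^\infty y^k e^{-y}|H_0(y)|\,dy<\infty$ for every $k$ (which requires the Fubini/Tonelli manipulations with $a_k(t)=\int_t^\infty u^k e^{-u}du$ and the finiteness of all moments of $T$), that the normalizations $\int_0^\infty e^{-y}H_0(y)\,dy=0$ and $\int_0^\infty y e^{-y}H_0(y)\,dy=1$ hold (the second is a nontrivial computation, not a definition), and finally that $\int_0^\infty(y^k/k!-1)e^{-y}H_0(y)\,dy=\sum_{j=0}^{k-1}\E T^j/(j+1)!$, which yields \eqref{6}. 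The paper devotes the whole of Appendix \ref{app.d} (Lemmas \ref{lem.1}--\ref{lem.cT}) to exactly these points. Your two closing integral identities are consequences of membership in ${\cal H}_0$, not a proof of it; as written, part (ii) assumes what is to be proved.
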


 \begin{rem}
 \label{rem.0}
 Given $T\in{\cal T}$, $\mu\in\mathds{R}$, and $\lambda>0$,
 we can always construct an r.v.\ $X\in{\cal H}^*$
 with ERS $\{\rho_n\}_{n\geq1}$ satisfying \eqref{6},
 with $\rho_1=\mu$, $\rho_2-\rho_1=\lambda$,
 namely,
 $X:=\mu+\lambda \varphi^{-1}(T)$.
 \end{rem}
 In the particular case where $T\in{\cal T}$ admits a density,
 the inversion formula \eqref{6.1} simplifies considerably, after an
 obvious application of Tonelli's theorem.
 \begin{cor}
 \label{cor.th.2}
 If the r.v.\ $T\in{\cal T}$
 has a density $f_T$
 then
 the function $H_0$ in \eqref{6.1} is given by
 \be
 \label{5.1}
 H_0(y)=\int_{1}^{y}  \frac{e^x}{x} f_T(x) dx
 +\int_{0}^1 \frac{e^t-1}{t} f_T(t)  dt-
 \int_{1}^\infty \frac{1}{t} f_T(t)dt,
 \ \ \ \ \ y>0.
 \ee
 Moreover,
 the r.v.\  $X_0=\varphi^{-1}(T)$
 has a continuous d.f.\ if $f_T(y)>0$
 for almost all $y>0$.
 \end{cor}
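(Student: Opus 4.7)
The plan is a direct calculation: apply integration by parts to the integral appearing in \eqref{6.1} and then substitute the formula \eqref{cT} for $c_T$ (specialized to the case where $T$ has a density). Since $T$ admits a density $f_T$, the distribution function $F_T$ is continuous, hence $F_T(y-)=F_T(y)$ and the sign convention for $\int_1^y$ becomes irrelevant for sign purposes.

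The key observation is that $\frac{d}{dx}\!\left(\frac{e^x}{x}\right)=\frac{(x-1)e^x}{x^2}$. Taking $u=F_T(x)$ and $dv=\frac{x-1}{x^2}e^x\,dx$, integration by parts on any interval with endpoints $1$ and $y$ yields
\[
\int_{1}^{y}\frac{x-1}{x^2}e^xF_T(x)\,dx
=\frac{e^y}{y}F_T(y)-e\,F_T(1)-\int_{1}^{y}\frac{e^x}{x}f_T(x)\,dx.
\]
Substituting into \eqref{6.1} the term $\frac{e^y}{y}F_T(y)$ cancels and we obtain
\[
H_0(y)=\int_{1}^{y}\frac{e^x}{x}f_T(x)\,dx+e\,F_T(1)-c_T.
\]

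Next I would compute $e\,F_T(1)-c_T$ using \eqref{cT}. Since $T$ has a density, $\E[g(T)I(T\in A)]=\int_A g(t)f_T(t)\,dt$, so
\[
c_T=\int_{1}^{\infty}\frac{1}{t}f_T(t)\,dt+\int_{0}^{1}\frac{1+et-e^t}{t}f_T(t)\,dt.
\]
Writing $e\,F_T(1)=\int_0^1 e\,f_T(t)\,dt$ and combining inside the integral on $(0,1)$,
\[
e-\frac{1+et-e^t}{t}=\frac{et-1-et+e^t}{t}=\frac{e^t-1}{t},
\]
so
\[
e\,F_T(1)-c_T=\int_{0}^{1}\frac{e^t-1}{t}f_T(t)\,dt-\int_{1}^{\infty}\frac{1}{t}f_T(t)\,dt,
\]
and inserting this back yields exactly \eqref{5.1}. (Tonelli is implicit in justifying the interchange used to recast the $c_T$ formula once more, should one re-derive \eqref{5.1} directly from \eqref{6.1} by swapping integrals rather than integrating by parts; either route works.)

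For the last assertion, observe from \eqref{5.1} that $H_0$ is absolutely continuous on $(0,\infty)$ with derivative $H_0'(y)=\frac{e^y}{y}f_T(y)$ a.e. If $f_T>0$ almost everywhere on $(0,\infty)$, then $H_0'>0$ a.e., so $H_0$ is strictly increasing. Since the inverse d.f.\ of $X_0$ is $G_0(u)=H_0(-\log(1-u))$ and $-\log(1-u)$ is a strictly increasing homeomorphism of $(0,1)$ onto $(0,\infty)$, $G_0$ is strictly increasing, which is equivalent to the d.f.\ of $X_0$ being continuous.

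The computation is essentially routine; the only place to be careful is the integration by parts when $0<y<1$, where the integral $\int_1^y$ is a signed integral, and checking that the formula for $c_T$ from \eqref{cT} indeed produces the two boundary integrals in \eqref{5.1}. No serious obstacle is anticipated.
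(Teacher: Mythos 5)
Your proof is correct, and it is essentially the computation the paper has in mind: the paper dismisses the corollary as "an obvious application of Tonelli's theorem," and your integration by parts (resting on the antiderivative identity $\tfrac{d}{dx}(e^x/x)=(x-1)e^x/x^2$) is just the one-variable counterpart of that Fubini interchange, with the $c_T$ bookkeeping and the strict-monotonicity argument for continuity matching Remark \ref{rem.mono}. No gaps.
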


 \begin{rem}
 \label{rem.mono}
 The formula \eqref{5.1} is unable to describe several continuous
 r.v.'s in ${\cal H}_0$, for which, however, the ordinary record process
 $\{R_n\}_{n\geq 1}$ is well-defined. This is so because
 any r.v.\ $T\in{\cal T}$ with dense support in $(0,\infty)$ will produce
 a continuous r.v.\ $X_0=\varphi^{-1}(T)\in{\cal H}_0$. This observation
 is a consequence of \eqref{mono}, which implies that, for such an r.v.\ $T$,
 $H_0$ is strictly increasing, and hence, its d.f.\ is continuous. It
 is obvious that we can find discrete r.v.'s $T$ with dense support
 and finite moment generating function at a neighborhood of zero.
 As a concrete example,
 set $T=T_1+T_2$, where  $T_1$ follows a Poisson d.f.\ with mean $1$,
 $\Pr(T_2=r_n)=2^{-n}$ ($n=1,2,\ldots$), with $\{r_1,r_2,\ldots\}$
 being an enumeration of the rationals of the interval $(0,1]$,
 and assume that $T_1,T_2$ are independent. Set also
 $X=\varphi^{-1}(T)$.
 The following theorem
 shows that
 this particular continuous r.v.\ $X$
 is,
 indeed, characterized from its ERS.
 \end{rem}

 With the aim of mapping $\varphi$,
 a complete characterization result
 based on the expected record sequence becomes possible, as follows.

 \begin{theo}
 \label{th.6.2}
 A random variable $X\in{\cal H}^*$ is characterized from
 its expected record sequence if and only if the random variable
 $T=\varphi(X)\in{\cal T}$ is characterized from its moments,
 where the mapping $\varphi$
 is given by Definition {\rm \ref{def.6.3}}.
 \end{theo}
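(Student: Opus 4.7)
The plan is to reduce ERS-characterization of $X\in{\cal H}^*$ to moment-determinacy of $T=\varphi(X)\in{\cal T}$ by exploiting the bijection $\varphi:{\cal H}_0\to{\cal T}$ of Theorem~\ref{th.6.1} together with the location-scale invariance from Proposition~\ref{prop.phi}.

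First I would establish a canonical normalization. For every $X\in{\cal H}^*$ with expected record sequence $\{\rho_n\}$, set $\mu:=\rho_1$ and $\lambda:=\rho_2-\rho_1$. Since the record process is non-decreasing and $X$ is non-degenerate, $R_2>R_1$ on a set of positive probability, so $\lambda>0$. Using the covariance $R_n(a+bX)=a+bR_n(X)$ for $b>0$, the normalized variable $X_0:=(X-\mu)/\lambda$ satisfies $\rho_1(X_0)=0$ and $\rho_2(X_0)=1$, hence $X_0\in{\cal H}_0$, while Proposition~\ref{prop.phi} yields $\varphi(X_0)=\varphi(X)=T$. By Theorem~\ref{th1} the ERS of $X$ is then encoded one-to-one by the triple $(\mu,\lambda,\{\E T^n\}_{n\geq 0})$ via $\rho_1=\mu$ and $(n+1)!(\rho_{n+2}-\rho_{n+1})=\lambda \E T^n$.

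Both directions then follow formally. For the ``only if'' part, suppose $T'\in{\cal T}$ matches all moments of $T$. Set $X_0':=\varphi^{-1}(T')\in{\cal H}_0$ and $Y:=\mu+\lambda X_0'$. Remark~\ref{rem.0} places $Y\in{\cal H}^*$, and Theorem~\ref{th1}(ii) combined with the moment identity gives $\mathrm{ERS}(Y)=\mathrm{ERS}(X)$. ERS-determinacy of $X$ forces $Y\law X$, hence $X_0'\law X_0$, and applying the bijection on ${\cal H}_0$ produces $T'=T$. For the ``if'' part, assume $Y\in{\cal H}^*$ satisfies $\mathrm{ERS}(Y)=\mathrm{ERS}(X)$. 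Matching coordinates yields $\mu_Y=\mu$, $\lambda_Y=\lambda$, and $\E(\varphi(Y))^n=\E T^n$ for all $n$ by Theorem~\ref{th1}(i). Moment-determinacy of $T$ forces $\varphi(Y)=T$, so the normalized $Y_0:=(Y-\mu)/\lambda\in{\cal H}_0$ satisfies $\varphi(Y_0)=T=\varphi(X_0)$. The bijection gives $Y_0=X_0$ in ${\cal H}_0$, and the normalization reconstructs $Y\law\mu+\lambda X_0\law X$.

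I do not anticipate any genuine obstacle; the only place demanding care is the canonical normalization and the verification that $X_0\in{\cal H}_0$, which are immediate from Proposition~\ref{prop.phi} and the covariance of records under orientation-preserving affine maps. Once this bridge is in place, the remainder is a purely formal back-and-forth across the ${\cal H}_0\leftrightarrow{\cal T}$ correspondence.
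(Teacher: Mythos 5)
Your proposal is correct and follows essentially the same route as the paper: normalize to ${\cal H}_0$ via the affine equivariance of records, use Theorem \ref{th1} to translate the ERS into the moment sequence of $T$, and transfer determinacy back and forth through the bijection $\varphi:{\cal H}_0\to{\cal T}$. The only cosmetic difference is that you argue both implications directly while the paper phrases them as proofs by contradiction; the underlying mechanism (injectivity of $\varphi$ and $\varphi^{-1}$ on the normalized classes, plus Theorem \ref{th1}(i)--(ii)) is identical.
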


 Suppose that for a given (non-degenerate) r.v.\ $X$,
 $\E X^{-}<\infty$ and $\E (X^+)^p<\infty$ for some $p>1$.
 According to Theorem \ref{th.6.3}, below, the transformation
 $T=\varphi(X)$ of any such r.v.\
 has finite moment generating function at a neighborhood of zero; hence
 $T$ it characterized from its moments, and we obtain
 the following result.

 \begin{cor}
 \label{cor.1} (Kirmani and Beg, 1984).
 Every random variable $X$ with finite absolute moment of order $p>1$
 is characterized
 from its
 expected record sequence.
 \end{cor}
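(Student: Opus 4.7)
The plan is to reduce the statement to a moment-determinacy question for $T=\varphi(X)$, and then cite the standard fact that a probability distribution is uniquely determined by its moments as soon as its moment generating function is finite in a neighborhood of the origin. The two ingredients that make this work are Theorem \ref{th.6.2} (characterization via $\varphi$) and the forthcoming Theorem \ref{th.6.3} (which supplies the exponential-moment estimate for $T=\varphi(X)$ under a sub-linear absolute moment assumption on $X$).

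The first step is to verify that the hypothesis of the corollary places $X$ in the admissible class ${\cal H}^*$, so that $\varphi(X)$ is defined in the first place. If $\E|X|^p<\infty$ for some $p>1$ (and $X$ is non-degenerate), then certainly $\E X^-<\infty$ and $\E (X^+)^p<\infty$. A classical estimate for record values (essentially, $\E|R_n|\leq C_n(\E|X|^p)^{1/p}$ with $C_n$ depending only on $n$ and $p$) gives $\E|R_n|<\infty$ for every $n$, which by the criterion following Definition \ref{def.6.1} (and Proposition \ref{prop.1}) is exactly the condition $X\in{\cal H}^*$. Consequently $T=\varphi(X)\in{\cal T}$ is a well-defined positive random variable with moments of all orders.

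The second step is to invoke Theorem \ref{th.6.3} to obtain a constant $\varepsilon>0$ such that $\E e^{\varepsilon T}<\infty$. A standard result on the Hamburger/Stieltjes moment problem (the analyticity of the Laplace transform in a strip forces the distribution to be moment-determinate) then yields that the law of $T$ is the unique probability measure on $(0,\infty)$ with its sequence of moments. Finally, Theorem \ref{th.6.2} transfers this moment-determinacy of $T$ back to a characterization of $X$ by its expected record sequence, completing the proof.

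The only delicate point is the first step: one must be careful that the integrability of $|X|^p$ with $p>1$ really suffices to make every $R_n$ integrable (integrability of $X$ alone is not enough, which is the whole reason the hypothesis $p>1$ appears). Once this is in place, the rest is a clean chain of implications using results already established in the paper plus the classical moment-problem fact, so there is no serious computational obstacle.
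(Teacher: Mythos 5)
Your proposal is correct and follows essentially the same route as the paper: the remark immediately preceding the corollary derives it from Theorem \ref{th.6.3} (the implication $\E(X^+)^p<\infty$ for some $p>1$ $\Rightarrow$ $M_T(a)<\infty$ near the origin, hence moment-determinacy of $T$) combined with Theorem \ref{th.6.2}. Your preliminary step verifying $X\in{\cal H}^*$ via a H\"older-type bound on $\E|R_n|$ is a sound substitute for the paper's appeal to Proposition \ref{prop.2}, which records the inclusion $\cup_{\delta>0}L^{1+\delta}\subsetneq{\cal H}$.
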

 However, we emphasize that the Kirmani-Beg characterization
 do not extends to ${\cal H}^*$:
 \begin{exam}
 \label{exam.1}
 There exist different r.v.'s in ${\cal H}_0$ with identical
 expected record sequence.
 A concrete example leading to absolutely continuous
 r.v.'s can be constructed
 by means of the classical example due to Stieltjes,
 as follows. Let $T$ be the lognormal r.v.\ with
 density $f_T(t)=e^{-(\log t)^2/2}/(t\sqrt{2\pi})$, $t>0$,
 and moments $\E T^n=e^{n^2/2}$. Each density in the set
 $\Big\{f_\lambda(t):=(1+\lambda \sin(\pi\log t))f_T(t),
 \ -1\leq \lambda\leq 1\Big\}$ admits the same moments as
 $T$ -- see Stoyanov (2013) or Stoyanov and Tolmatz (2005).
 Assume that $T_{\lambda}$ has density $f_\lambda$, and consider
 the r.v.\ $X_\lambda=\varphi^{-1}(T_\lambda)$,
 with distribution inverse
 given by
 \[
  G_{\lambda}(u)
  :=\int_{1}^{-\log(1-u)}
  \frac{e^y}{y} f_\lambda(y) dy
  +\int_{0}^1 \frac{e^t-1}{t} f_\lambda(t) dt
  -\int_{1}^\infty \frac{1}{t} f_\lambda(t) dt,
 \]
 $0<u<1$.
 Using an obvious notation, it is clear from
 Theorem {\rm \ref{th1}(ii)} and
 Corollary {\rm \ref{cor.th.2}}
 that
 $\E R_1(X_\lambda)=0$, $\E R_2(X_\lambda)=1$,
 and the sequence $\rho_n^{(\lambda)}:=\E R_n(X_\lambda)$ satisfies
 \eqref{6} with $T_\lambda$ in place of $T$.
 Thus, each $X_{\lambda}$, $-1\leq \lambda\leq 1$,
 has the same expected record sequence,
 namely,
 \[
 \rho_n^{(\lambda)}
 =\int_{0}^1 \frac{[-\log(1-u)]^{n-1}}{(n-1)!} G_{\lambda}(u) du
 =\sum_{k=0}^{n-2} \frac{e^{k^2/2}}{(k+1)!},
 \ \ n=1,2,3,\ldots, \ \  -1\leq \lambda\leq 1,
 \]
 where an empty sum should be treated as zero.
 Differentiating $G_\lambda$, it follows
 that $G_{\lambda_1}\neq G_{\lambda_2}$ for
 $\lambda_1\neq \lambda_2$. Therefore, the
 function $g:=G_{\lambda_1}-G_{\lambda_2}$
 belongs to
 \[
 {\cal H}(0,1):=\left\{ g:(0,1)\to\RR:
 \int_0^1 |g(u)| [-\log(1-u)]^k  du<\infty \ \ \mbox{ for }
 \ \ k=0,1,\ldots \right\},
 \]
 it is non-zero
 (when $\lambda_1\neq \lambda_2$)
 in a set of positive measure, and satisfies
 \be
 \label{inc}
 \int_0^1 g(u) [-\log(1-u)]^k  du=0, \ \ \ k=0,1,\ldots \ .
 \ee
 It is easily checked that every $X_\lambda$
 admits a density.

 In fact, the Kirmani-Beg characterization holds true because the system
 of functions
 $
 {\cal L}:=\Big\{ [-\log(1-u)]^k, \ \ k=0,1,\ldots\Big\}
 $
 is complete in $\cup_{\delta>0}L^{1+\delta}(0,1)$, see Lemma 3 in Lin
 {\rm (1987)},
 while \eqref{inc} implies that
 ${\cal L}$
 is not complete in the larger space ${\cal H}(0,1)$.
 \end{exam}


 Our final result is applicable to most practical
 situations regarding characterizations (and inversions)
 in terms of the expected record sequence.
 \begin{theo}
 \label{th.6.3}
 Let $X\in{\cal H}^*$ with ERS $\{\rho_n\}_{n\geq 1}$, set
 $w_n:=\E |R_n|$, $T=\varphi(X)$,
 and define the following generating functions:
 \begin{eqnarray*}
 G_{\rho}(t)&:=&\mbox{$\sum_{n\geq 1} \rho_n t^{n-1}$}
 \hspace{4ex}
 \mbox{\rm [the generating function of $\rho_n$'s]}
 \\
 G_{w}(t)&:=&\mbox{$\sum_{n\geq 1} w_n t^{n-1}$}
 \hspace{3.75ex}
 \mbox{\rm [the generating function of $w_n$'s]}
 \\
 M_T(a)&:=&\E e^{a T}
 \hspace{9.5ex}
 \mbox{\rm [the moment generating function of $T$].}
 \end{eqnarray*}
 Then,
 the following statements are equivalent.

 \noindent
 {\rm (i)} \ \ $\E(X^+)^p<\infty$ for some $p>1$.
 \\
 {\rm (ii)} \ $G_{\rho}(t)$ is finite for $t$ in a neighborhood of zero.
 \\
 {\rm (iii)} \ $G_{w}(t)<\infty$ for some $t>0$.
 \\
 {\rm (vi)} \ $M_T(a)<\infty$ for some $a>0$.

 \noindent
 If {\rm (i)--(iv)} hold, then we can find $\epsilon_0\in(0,1)$
 such that
 \be
 \label{6.8}
 G_{\rho}(t)=\frac{1}{1-t}
 \left\{\rho_1+(\rho_2-\rho_1) \int_{0}^t M_T(s) ds\right\},  \ \
 |t|<\epsilon_0.
 \ee
 Consequently,
 \be
 \label{6.9}
 M_T(t)=\frac{1}{\rho_2-\rho_1}\ \frac{d}{dt}\Big((1-t)G_{\rho}(t)\Big),
 \ \
 |t|<\epsilon_0.
 \ee
 Therefore, under assumption {\rm (i)},
 $X$ is characterized from the generating function
 $G_{\rho}$ of its ERS through
 $X=\rho_1+(\rho_2-\rho_1)\varphi^{-1}(T)$,
 where $T$
 has moment generating function $M_T$, given by \eqref{6.9}, and
 $\varphi^{-1}=\varphi'$ as in Definition {\rm \ref{def.6.4}}.
 \end{theo}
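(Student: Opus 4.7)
The plan is to prove the chain (i) $\Leftrightarrow$ (iii) $\Leftrightarrow$ (ii) $\Leftrightarrow$ (iv), then read off \eqref{6.8} and \eqref{6.9}, and finally invoke Theorem~\ref{th.6.2} for the reconstruction. The computational engine is the representation $R_n=H(S_n)$ with $H(y):=F^{-1}(1-e^{-y})$ and $S_n$ an Erlang$(n,1)$ sum, giving $\rho_n=\int_0^\infty H(y)\frac{y^{n-1}e^{-y}}{(n-1)!}\,dy$ and the analogous formula for $w_n$. Summing against $t^{n-1}$ and invoking Tonelli (always for $G_w$, hence Fubini for $G_\rho$ whenever $G_w(t)<\infty$) produces the Laplace-transform identities
\[
G_w(t)=\int_0^\infty |H(y)|e^{-(1-t)y}\,dy,\qquad G_\rho(t)=\int_0^\infty H(y)e^{-(1-t)y}\,dy,\qquad 0\le t<1.
\]

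For (i) $\Leftrightarrow$ (iii), outside the trivial case $H\le 0$ (where both conditions hold automatically, since then $X^+=0$ and $|H|$ is bounded on $[1,\infty)$ by monotonicity), let $y_H:=\inf\{y>0:H(y)\ge 0\}<\infty$; the negative part $H^-$ lives on $[0,y_H]$ and is harmlessly integrable on that compact set. On $[y_H,\infty)$ the monotonicity of $H$ converts $\int_{y_H}^\infty H(y)e^{-\alpha y}\,dy<\infty$ (with $\alpha=1-t<1$) into the pointwise bound $H(y)=o(e^{\alpha y})$ via the estimate $H(y-1)e^{-\alpha y}\le \frac{\alpha}{e^\alpha-1}\int_{y-1}^y H(s)e^{-\alpha s}\,ds$; consequently $H(y)^p e^{-y}\le C\,e^{(p\alpha-1)y}$ is integrable for any $p\in(1,1/\alpha)$, proving (iii) $\Rightarrow$ (i). The converse is H\"older's inequality with conjugate exponents $(p,p/(p-1))$ applied to $H\cdot e^{-\alpha y}=(He^{-y/p})\,e^{-(\alpha-1/p)y}$ for any $\alpha\in(1/p,1)$.

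The equivalence (ii) $\Leftrightarrow$ (iii) is straightforward: $|\rho_n|\le w_n$ yields (iii) $\Rightarrow$ (ii). Conversely, set $\rho_n^{\pm}:=\int H^{\pm}(y)f_{S_n}(y)\,dy$; since $H^-$ is bounded on the compact $[0,y_H]$ and the Erlang$(n,1)$ density is increasing on $[0,n{-}1]$, one has $\rho_n^-\le C y_H^{n-1}/(n-1)!$, contributing at most $Ce^{y_H t}$ to any generating function. Hence (ii) forces $\sum \rho_n^+ t^{n-1}<\infty$ and thus (iii). The equivalence (ii) $\Leftrightarrow$ (iv), together with \eqref{6.8}, comes from Theorem~\ref{th1}(i): writing $(1-t)G_\rho(t)=\rho_1+\sum_{n\ge 1}(\rho_{n+1}-\rho_n)t^n$ and substituting $\rho_{n+1}-\rho_n=(\rho_2-\rho_1)\E T^{n-1}/n!$ gives, termwise,
\[
(1-t)G_\rho(t)=\rho_1+(\rho_2-\rho_1)\sum_{k\ge 0}\frac{\E T^k}{(k+1)!}t^{k+1}=\rho_1+(\rho_2-\rho_1)\int_0^t M_T(s)\,ds.
\]
Since $\rho_2-\rho_1\ne 0$ by non-degeneracy, the two sides are power series with the same radius of convergence, so (ii) $\Leftrightarrow$ (iv); differentiating yields \eqref{6.9}.

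For the final reconstruction, (iv) together with the classical moment-determinacy criterion (finite MGF near zero) makes $T$ determined by its moments; Theorem~\ref{th.6.2} then characterises $X$ from its ERS, and Remark~\ref{rem.0} with $\mu=\rho_1$, $\lambda=\rho_2-\rho_1$ gives the explicit formula $X=\rho_1+(\rho_2-\rho_1)\varphi^{-1}(T)$. The main delicate point I anticipate is the bookkeeping at the interface between the formal power-series identity for $(1-t)G_\rho(t)$ and the genuine radii of convergence---in particular ensuring the factor $1/(1-t)$ does not cause spurious blow-up near $t=1$ and that the various pos/neg-part splittings of $H$ are neutralised uniformly regardless of whether $H$ is eventually positive, eventually negative, or of constant sign.
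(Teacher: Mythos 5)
Your proof is correct in substance and rests on the same core ingredients as the paper's --- the Erlang representation $\rho_n=\int_0^\infty H(y)\,y^{n-1}e^{-y}/(n-1)!\,dy$, the use of monotonicity of $H$ to upgrade integrability of $H(y)e^{-(1-t)y}$ into the pointwise bound $H(y)=O(e^{(1-t)y})$ for (iii)$\Rightarrow$(i), H\"older's inequality for the converse direction, and the termwise identity behind \eqref{6.8} --- but you close the cycle of implications differently. The paper runs (ii)$\Rightarrow$(iii)$\Rightarrow$(i)$\Rightarrow$(iv)$\Rightarrow$(ii): its heaviest step is (i)$\Rightarrow$(iv), where $M_T(a)$ is dominated by $\sum_{n\ge0}(n+1)(w_{n+2}+w_{n+1})a^n=\int_0^\infty(y+1+ay)e^{ay}e^{-y}|H(y)|\,dy$ and H\"older is applied to that integral; and (iv)$\Rightarrow$(ii) comes from the closed form $G_\rho(a)=\frac{1}{1-a}\E\big[(e^{aT}-1)/T\big]$. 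You instead prove (i)$\Leftrightarrow$(iii) directly and replace the analytic step (i)$\Rightarrow$(iv) by the purely coefficientwise observation that $(1-t)G_\rho(t)$ and $\rho_1+(\rho_2-\rho_1)\int_0^tM_T(s)\,ds$ have the same Taylor coefficients via \eqref{6}; this is arguably cleaner and delivers \eqref{6.8} for free rather than as an afterthought. Two small points need repair, neither a real gap. First, $H^-$ need not be bounded on $[0,y_H]$ (e.g.\ $H(y)=\log y$ near $0$ is admissible in ${\cal H}^*$); your bound $\rho_n^-\le Cy_H^{\,n-1}/(n-1)!$ survives by simply writing $y^{n-1}\le y_H^{\,n-1}$ and using $\int_0^{y_H}e^{-y}H^-(y)\,dy<\infty$, which is in effect how the paper treats this term (bounded exponential weight against an $L^1$ function, not a bounded $H$). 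Second, ``the same radius of convergence'' is literally false --- multiplying by $1-t$ can enlarge the radius, as $G_\rho(t)=1/(1-t)$ shows; what you need, and what does hold, is that one radius is positive iff the other is, which follows because $\rho_{n+1}=\rho_1+\sum_{j\le n}(\rho_{j+1}-\rho_j)$ with nonnegative summands, so geometric decay of the differences forces at most geometric growth of the $\rho_n$, and conversely. The final reconstruction step via moment determinacy, Theorem~\ref{th.6.2} and Remark~\ref{rem.0} matches the paper's intent exactly.
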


 It is well-known
 that any r.v.\ $T$ is uniquely determined from its
 moments, if it admits a finite moment generating function at
 a neighborhood of
 zero. On the other hand, it is also known
 that we can find several r.v.'s $T\in{\cal T}$
 that are characterized from their moment sequence, although
 $\E e^{aT}=\infty$ for all $a>0$. A large family of such r.v.'s
 is the so called {\it Hardy class} --
 see Stoyanov (2013) and Lin and Stoyanov (2016)
 for more details.
 Clearly, the corresponding r.v.'s
 $\varphi^{-1}(T)$ are not treated by Kirmani-Beg's (1984)
 characterization,
 showing that the proposed method, based on the transformation $\varphi$,
 is quite efficient.

\section*{Acknowledgements}
I would like to thank A.\ Giannopoulos for helpful discussions that
led to the results of Theorem
\ref{th.6.3}.

 \appendix

 \section{Existence of expectations of records}
 \setcounter{equation}{0}
 \label{app.a}

 It is well-known (see, e.g., Arnold {\it et al.}, 1998)
 that $U_n$ in \eqref{3} has density
 \be
 \label{4}
 f_{U_n}(u)=\frac{L(u)^{n-1}I(0<u<1)}{(n-1)!},  \ n=1,2,\ldots,
 \ \mbox{where} \  L(u):=-\log(1-u), \ 0<u<1,
 \ee
 and $I$ denotes the indicator function.
 We may use \eqref{4} to calculate the d.f.\ $F_n$ of $R_n$ as follows:
 \[
 F_n(x)=\Pr\Big(F^{-1}(U_n)\leq x\Big)
 =\Pr\Big(U_n\leq F(x)\Big)
 =\frac{1}{(n-1)!}\int_0^{F(x)} L(u)^{n-1}
 du.
 \]
 Substituting $L(u)=y$ in the integral we see that
 $F_n(x)=\Pr\Big(E_1+\cdots+E_n \leq L(F(x))\Big)$,
 where $E_1,\ldots,E_n$ are i.i.d.\ from the standard exponential,
 $\mbox{Exp}(1)$.
 From the well-known relationship regarding waiting times for the
 standard (with intensity one) Poisson
 process, $\{Y_{t}, t\geq 0\}$,
 we have
 \[
 \Pr\Big(E_1+\cdots+E_n \leq t \Big)=1-\Pr(Y_{t}\leq n-1)
 =1-e^{-t}\sum_{k=0}^{n-1} \frac{t^k}{k!}, \ \ \ \ t\geq 0.
 \]
 Therefore, with $t=L(F(x))$, we obtain
 (cf.\ Nagaraja, 1978)
 \be
 \label{5}
 F_n(x)
 =
 1-(1-F(x))\sum_{k=0}^{n-1} \frac{L(F(x))^k}{k!}, \ \ \ x\in\RR
 \ \ \ (n=1,2,\ldots).
 \ee
 In the above sum, the term $L(F(x))^0$ should be treated as $1$ for all $x$;
 moreover, the product $(1-F(x))L(F(x))^k$ should be treated as $0$
 whenever $k\geq 1$
 and $F(x)=1$.
 Hence, \eqref{5} yields $F_1(x)=F(x)$ and, e.g.,
 \[
 F_2(x)=\left\{
 \begin{array}{ll}
 1-\Big(1-F(x)\Big)\Big(1+L(F(x))\Big), & \mbox{if } \ F(x)<1,
 \\
 1, & \mbox{if } \ F(x)=1.
 \end{array}
 \right.
 \]

 Since our problem concerns the expectations $\E R_n$ for all $n$,
 we have to define
 an appropriate space to work with; that is, to
 guarantee that these expectations are, all, finite.
 The natural space is given by Definition \ref{def.6.1},
 since the next proposition holds true.
 %
 \begin{prop}
 \label{prop.1}
 The following statements are equivalent:

 \noindent
 {\rm (i)} $X\in{\cal H}$, i.e., $H\in{\cal H}$,
 where $H(y):=F^{-1}(1-e^{-y})$, $y>0$, and $F$ is the d.f.\ of $X$.

 \noindent
 {\rm (ii)} $\E |R_n|<\infty$ for all $n=1,2,\ldots$ \ .

 \noindent
 {\rm  (iii)} $\E X^{-}<\infty $ and $\E X (\log^{+}X)^m<\infty$
 for all $m>0$, where $\log^{+} x=\log x$ if $x\geq1$ and
 $=0$ otherwise.

 \noindent
 {\rm (iv)} $\int_{0}^1 L(u)^{m}|F^{-1}(u)|du<\infty$, \ \ $m=0,1,\ldots$ \ .

 \noindent
 {\rm (v)} $\int_{0}^\infty y^{m}e^{-y}|H(y)|dy<\infty$, \ \ $m=0,1,\ldots$ \ .

 \noindent
 If {\rm (i)--(v)} are satisfied, then the sequence $\rho_n=\E R_n$
 is given by
 \be
 \label{E}
 \rho_n=\int_{-\infty}^\infty (I(x>0)-F_n(x)) dx
 = \int_{0}^1 \frac{L(u)^{n-1}}{(n-1)!} F^{-1}(u) du
 =\int_{0}^\infty \frac{y^{n-1}}{(n-1)!}e^{-y}H(y)dy,
 \ee
 $n=1,2,\ldots$,
 with $F_n$ given by \eqref{5}
 and $L$ by \eqref{4}.
 \end{prop}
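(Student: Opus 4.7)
My plan is to establish a short cycle of equivalences using change of variable and Fubini, isolating the one genuinely delicate step, and then read off the three formulas in \eqref{E} at the end. The easy implications (i) $\Leftrightarrow$ (v), (v) $\Leftrightarrow$ (iv), (iv) $\Leftrightarrow$ (ii) should be essentially mechanical. For (i) $\Leftrightarrow$ (v), the function $H(y)=F^{-1}(1-e^{-y})$ inherits left-continuity and monotonicity from $F^{-1}$, and the integrability clause in Definition \ref{def.6.1} is literally condition (v); the degenerate/constant case sits inside $\mathcal{H}$ by definition. For (v) $\Leftrightarrow$ (iv) I would substitute $u=1-e^{-y}$, so that $du=e^{-y}dy$, $L(u)=y$, $F^{-1}(u)=H(y)$. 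For (iv) $\Leftrightarrow$ (ii), the density of $U_n$ in \eqref{4} combined with $R_n=F^{-1}(U_n)$ gives $\E|R_n|=(n-1)!^{-1}\int_0^1 L(u)^{n-1}|F^{-1}(u)|\,du$, whose finiteness for all $n$ is exactly (iv).

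The substantive step is (iii) $\Leftrightarrow$ (v). I would split the integral $\int_0^\infty y^m e^{-y}|H(y)|\,dy$ at $y=1$. On $(0,1]$ the factor $y^m e^{-y}$ is bounded, so convergence reduces to $\int_0^1|H(y)|\,dy<\infty$, which via the substitution from step~(v) $\Leftrightarrow$ (iv) becomes $\int_0^{1-1/e}|F^{-1}(u)|\,du/(1-u)<\infty$; since $1/(1-u)\le e$ there, this is equivalent to $\E X^-<\infty$. On $(1,\infty)$ only $H^+$ matters (the negative part is absorbed by the lower-tail bound), and by Fubini the upper-tail integral equals $\int_0^\infty\!\int_{(-\log\bar F(x))\vee 1}^\infty y^m e^{-y}\,dy\,dx$, which, by the standard asymptotics of the incomplete gamma, converges for every $m$ iff $\int_{x_0}^\infty(-\log\bar F(x))^m\bar F(x)\,dx<\infty$ for every $m$.

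The comparison between this last integral and $\E X(\log^+X)^m$ is where the one-sided nature of Markov bounds on $-\log\bar F$ becomes the key difficulty. Fortunately the ``for all $m$'' form of (iii) supplies the needed slack in both directions. Assuming (v), the bound $\bar F(x)\le\E X^+/x$ (which holds once $m=0$ is used) gives $-\log\bar F(x)\ge\log x - C$ for $x\ge 1$, so $(-\log\bar F)^m\ge c_m(\log x)^m$ eventually; by integration by parts $\int(\log x)^m\bar F(x)\,dx<\infty$ iff $\E X^+(\log^+X)^m<\infty$, yielding (iii). Conversely, assuming (iii), Markov applied to $X(\log^+X)^{m+2}$ gives $\bar F(x)\le C_{m+2}/(x(\log x)^{m+2})$, whence $-\log\bar F(x)\le 2\log x$ for large $x$, so $\bar F(x)(-\log\bar F(x))^m\le C'/(x(\log x)^2)$, which is integrable at infinity, producing (v).

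With the equivalences in hand, the three expressions in \eqref{E} follow immediately: the first from the standard identity $\E Y=\int_{-\infty}^\infty(I(x>0)-F_Y(x))\,dx$ applied to $Y=R_n$ (valid because $\E|R_n|<\infty$ by (ii)); the second from $R_n=F^{-1}(U_n)$ and the density \eqref{4}; the third from the substitution $u=1-e^{-y}$ employed in step~(v) $\Leftrightarrow$ (iv). The main obstacle is the tail comparison in step~(iii) $\Leftrightarrow$ (v): Markov only controls $-\log\bar F$ from below in terms of $\log x$, which is why the conclusion must be phrased with all log-moments at once, each direction consuming one (or two) extra log-powers of the hypothesis to close the gap.
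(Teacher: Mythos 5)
Your proposal supplies a proof that the paper itself omits (the paper merely remarks that these results ``are due to Nagaraja (1978)'' and that his arguments carry over), and most of it is right: the cycle (i) $\Leftrightarrow$ (v) $\Leftrightarrow$ (iv) $\Leftrightarrow$ (ii) via the substitution $u=1-e^{-y}$ and the density \eqref{4}; the reduction of the $(0,1]$ part to $\E X^-<\infty$; the layer--cake identity $\int_1^\infty y^m e^{-y}H^+(y)\,dy=\int_0^\infty\int_{L(F(x))\vee 1}^\infty y^m e^{-y}\,dy\,dx$ (using $H(y)>x\iff y>L(F(x))$) together with the asymptotics $\int_t^\infty y^me^{-y}dy\asymp t^me^{-t}$; the absorption of $H^-$ on $(1,\infty)$ by monotonicity of $H$; the direction (v) $\Rightarrow$ (iii) via $1-F(x)\le \E X^+/x$; and the three expressions in \eqref{E}.

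There is, however, one genuinely false step, sitting exactly at the point you yourself identify as the crux. In (iii) $\Rightarrow$ (v) you assert that the Markov bound $1-F(x)\le C_{m+2}/\big(x(\log x)^{m+2}\big)$ yields ``$-\log(1-F(x))\le 2\log x$ for large $x$''. This is backwards: an upper bound on the tail gives a \emph{lower} bound on $-\log(1-F(x))$, and the claimed upper bound fails for every light-tailed or bounded $X$ (for $1-F(x)=e^{-x^2}$ one has $-\log(1-F(x))=x^2$; for bounded $X$ it is eventually $+\infty$). The target inequality $(1-F(x))\big(-\log(1-F(x))\big)^m\le C'/\big(x(\log x)^2\big)$ is nonetheless true, but it must be reached differently: the function $\psi(s)=s(-\log s)^m$ is increasing on $(0,e^{-m})$, so once $x$ is large enough that both $1-F(x)$ and the Markov bound $u(x):=C_{m+2}/\big(x(\log x)^{m+2}\big)$ lie below $e^{-m}$, monotonicity gives $\psi(1-F(x))\le\psi(u(x))\le 2^mC_{m+2}/\big(x(\log x)^2\big)$, since $-\log u(x)\le 2\log x$ for large $x$. (Equivalently, split the range of integration according to whether $1-F(x)$ exceeds $x^{-1}(\log x)^{-(m+2)}$ or not; the analogous split on the quantile side, according to whether $F^{-1}(u)\le(1-u)^{-1/2}$, is the standard device in Nagaraja's proof.) With that one repair your argument is complete.
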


 \begin{prop}
 \label{prop.2}
 For $\alpha\geq 1$ set
 ${L}^\alpha =\left\{X:\E |X|^\alpha<\infty\right\}$,
 where identically distributed r.v.'s are considered as equal.
 Then,
 $\cup_{\delta>0}{L}^{1+\delta}
 \varsubsetneqq
 {\cal H}
  \varsubsetneqq
   {L}^{1}.
 $
 \end{prop}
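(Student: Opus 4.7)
The plan is to invoke the characterization of $\cal{H}$ supplied by Proposition~\ref{prop.1}(iii): a non-degenerate $X$ belongs to $\cal{H}^{*}$ iff $\E X^{-}<\infty$ and $\E X^{+}(\log^{+}X^{+})^{m}<\infty$ for every $m>0$; degenerate r.v.'s trivially lie in every $L^{p}$ as well as in $\cal{H}$, so the analysis reduces to non-degenerate $X$.

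For the two inclusions, I would argue as follows. To show $\cal{H}\subseteq L^{1}$, specialize (iii) to $m=1$: since $\E X^{+}(\log^{+}X^{+})\ge \E X^{+}\mathbf{1}(X>e)$, this bound plus the trivial estimate $\E X^{+}\mathbf{1}(0\le X\le e)\le e$ give $\E X^{+}<\infty$; combined with $\E X^{-}<\infty$ we obtain $X\in L^{1}$. To show $\bigcup_{\delta>0}L^{1+\delta}\subseteq \cal{H}$, observe that for each fixed $m,\delta>0$ there is a constant $C=C(m,\delta)$ with $x(\log^{+}x)^{m}\le C\,x^{1+\delta}$ for all $x\ge 0$. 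Hence $\E|X|^{1+\delta}<\infty$ forces $\E X^{-}<\infty$ and $\E X^{+}(\log^{+}X^{+})^{m}<\infty$ for every $m>0$, so (iii) yields $X\in\cal{H}$.

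For strictness I would exhibit two explicit examples. First, to witness $\cal{H}\varsubsetneqq L^{1}$, let $X\ge e^{2}$ have density proportional to $1/(x^{2}(\log x)^{2})$ on $[e^{2},\infty)$. A direct calculation gives $\E X=\int_{e^{2}}^{\infty}dx/(x(\log x)^{2})<\infty$, so $X\in L^{1}$; but $\E X\log X=\int_{e^{2}}^{\infty}dx/(x\log x)=\infty$, violating (iii) at $m=1$, so $X\notin\cal{H}$. Second, to witness $\bigcup_{\delta>0}L^{1+\delta}\varsubsetneqq \cal{H}$, define $H(y):=\max\{1,\,e^{y-\sqrt{y}}\}$ for $y>0$; since $y-\sqrt{y}$ is non-decreasing on $[1/4,\infty)$ and non-positive on $(0,1]$, the function $H$ is left-continuous and non-decreasing. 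The substitution $u=\sqrt{y}$ gives $\int_{0}^{\infty}y^{m}e^{-y}H(y)\,dy<\infty$ for every $m\ge 0$ (the tail behaves like $\int y^{m}e^{-\sqrt{y}}dy=2\,\Gamma(2m+2)$), so the r.v.\ $X=H(E)$ with $E$ standard exponential lies in $\cal{H}^{*}$. On the other hand, for every $\delta>0$, $\E X^{1+\delta}=\int_{0}^{\infty}H(y)^{1+\delta}e^{-y}\,dy\ge\int_{1}^{\infty}e^{\delta y-(1+\delta)\sqrt{y}}\,dy=\infty$, so $X\notin L^{1+\delta}$ for any $\delta>0$.

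The only delicate points are the monotonicity check on $H$ (handled by the $\max\{1,\cdot\}$ truncation, since $y-\sqrt{y}$ is increasing on $[1/4,\infty)$) and the bookkeeping on the integrals $\int_{0}^{\infty}y^{m}e^{-\sqrt{y}}\,dy$; both are routine once the examples are fixed, so the genuinely new content is merely the choice of these two extremal densities/quantile functions.
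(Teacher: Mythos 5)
Your proposal is correct. Note that the paper itself offers no proof of this proposition: it only remarks that the results are due to Nagaraja (1978) for the case of a density and/or non-negative $X$, and that ``his proofs continue to hold.'' Your argument therefore supplies what the paper omits, and it does so along the natural route: reduce everything to the $L\log^m L$ criterion of Proposition \ref{prop.1}(iii), obtain the two inclusions from the elementary comparisons $x\mathbf{1}(x>e)\le x\log^{+}x$ and $x(\log^{+}x)^m\le C\,x^{1+\delta}$, and then exhibit explicit witnesses for strictness. Both counterexamples check out: the density proportional to $1/(x^{2}(\log x)^{2})$ on $[e^{2},\infty)$ is integrable with finite mean but $\E X\log X=\int_{e^{2}}^{\infty}dx/(x\log x)=\infty$, so (iii) fails at $m=1$; and $H(y)=\max\{1,e^{y-\sqrt{y}}\}$ is non-decreasing (constant equal to $1$ on $(0,1]$, and $y-\sqrt{y}$ increasing on $[1/4,\infty)$), satisfies $\int_{0}^{\infty}y^{m}e^{-y}H(y)\,dy\le\int_{0}^{1}y^m e^{-y}dy+2\Gamma(2m+2)<\infty$ for every $m$, yet $\int_{1}^{\infty}e^{\delta y-(1+\delta)\sqrt{y}}\,dy=\infty$ for every $\delta>0$. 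The only stylistic quibble is that you should make explicit that failure of (iii) rules out membership in ${\cal H}$ via the stated equivalence with (i), and that your second example is non-degenerate (it is, since $H$ is non-constant); neither affects validity.
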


 These results are due to Nagaraja (1978) in the particular case where
 $X$ has a density and/or is non-negative, but his proofs continue to
 hold in our case too.

 \section{The transformation}
 \setcounter{equation}{0}
 \label{app.b}

 \begin{lem}
 \label{lem.6.2} If $X_1,X_2\in{\cal H}_0$ and $\varphi(X_1)=\varphi(X_2)$
 then $X_1=X_2$.
 \end{lem}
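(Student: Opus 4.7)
The plan is to exhibit a direct relationship between the distribution of $T = \varphi(X)$ and the Stieltjes measure associated with $H(y) = F^{-1}(1-e^{-y})$. Specifically, I aim to establish the identity
\[
 dF_T(y) \;=\; y\, e^{-y}\, dH(y) \qquad \text{on } (0, \infty). \qquad (\star)
\]
Granting $(\star)$, the lemma follows quickly: if $\varphi(X_1) = \varphi(X_2)$, then $F_{T_1} \equiv F_{T_2}$, so the Stieltjes measures $y\,e^{-y}\,dH_1$ and $y\,e^{-y}\,dH_2$ on $(0, \infty)$ coincide, and since $y\,e^{-y} > 0$ there, this forces $dH_1 = dH_2$. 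Thus $H_1 - H_2$ is constant on $(0, \infty)$, and the normalization $\int_0^\infty e^{-y} H_i(y)\,dy = 0$, which both $H_1$ and $H_2$ satisfy by definition of ${\cal H}_0$, pins the constant down to zero. Consequently $H_1 \equiv H_2$, i.e.\ $X_1 = X_2$.

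To prove $(\star)$, I first verify that the normalizing constant in Definition \ref{def.6.3}, $C := \int_\RR (1-F(v))\,L(F(v))\,dv$, equals $1$ on ${\cal H}_0$. The explicit formula for $F_2$ in Appendix \ref{app.a} gives $F(x) - F_2(x) = (1-F(x))\,L(F(x))$, whence $C = \rho_2 - \rho_1 = 1 - 0 = 1$ by \eqref{E} and the defining conditions of ${\cal H}_0$. Next, since $V$ is absolutely continuous, $F_T(t) = \Pr(L(F(V)) \leq t) = \Pr(F(V) \leq 1-e^{-t}) = F_V(v_t)$, where $v_t := \sup\{v : F(v) \leq 1-e^{-t}\}$. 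A change of variable $y = L(F(v))$ in $F_V(v_t) = \int_{-\infty}^{v_t}(1-F(v))\,L(F(v))\,dv$ converts the integrand on regions where $F$ is strictly increasing into $y\,e^{-y}\,H'(y)\,dy = y\,e^{-y}\,dH(y)$; a flat interval $F \equiv 1-e^{-y_0}$ on $[a,b]$ contributes Lebesgue mass $(b-a)\,y_0\,e^{-y_0}$ to the left side, which matches $y_0\,e^{-y_0}\,dH(\{y_0\})$ on the right side, since $H$ jumps by $b-a$ at $y_0$; and jumps of $F$ (atoms of $X$) correspond to flat intervals of $H$ that carry Lebesgue measure zero on the left and $dH$-measure zero on the right, so they are invisible to both sides.

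The main technical obstacle is the careful execution of the Stieltjes change of variable $v \leftrightarrow y = L(F(v))$ in the presence of both jumps and flat parts of $F$. I would handle this either directly, working with Stieltjes integration and treating the singular and continuous components of $F$ separately, or, more smoothly, by approximating $F$ by a sequence of strictly increasing absolutely continuous distributions (for which the change of variable is classical) and passing to the limit; the integrability conditions built into ${\cal H}^*$ ensure that the necessary dominated convergence arguments go through.
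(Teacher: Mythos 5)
Your argument is correct in outline but takes a genuinely different route from the paper's. You aim directly at the measure-level identity $dF_T(y)=y e^{-y}\,dH(y)$ on $(0,\infty)$, from which injectivity is immediate (the density $y e^{-y}$ is strictly positive, so $dH_1=dH_2$, hence $H_1-H_2$ is constant, and the ${\cal H}_0$ normalization kills the constant); that deduction, together with your observation that the normalizing constant of $f_V$ equals $\rho_2-\rho_1=1$ on ${\cal H}_0$, is clean and correct. The paper instead proves the \emph{integrated} form of your $(\star)$: expanding $L(F(x))=\sum_{m\ge1}F(x)^m/m$ and applying the identity \eqref{6.3} yields \eqref{6.4} and Remark \ref{rem.6.1}, after which $\varphi(X_1)=\varphi(X_2)$ becomes the Volterra-type equation $h(u)=S(u)^{-1}\int_0^u s(w)h(w)\,dw$ for $h=F_1^{-1}-F_2^{-1}$, solved by differentiating to get $h'\equiv0$. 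The trade-off is that the paper's series/Fubini manipulation handles atoms and flat parts of $F$ uniformly with no case analysis, whereas your route is conceptually shorter once $(\star)$ is available but concentrates all the difficulty in the Stieltjes change of variable $v\mapsto L(F(v))$, which you verify only on three model configurations and otherwise defer to a direct computation or an approximation argument. Since $(\star)$ \emph{is} the lemma in essence, that deferred step is a real (though fillable) gap; a rigorous substitute is the integrated-by-parts version $F_T(y-)=y e^{-y}H(y)-\int_0^y(1-w)e^{-w}H(w)\,dw$, which appears in the paper's proof of Theorem \ref{th.6.1}.
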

 \begin{proof}
 Let $F_i$ (resp., $F_i^{-1}$) be the d.f.\ (resp., the inverse d.f.)
 of $X_i$, and $V_i$ the corresponding r.v.\
 with density $f_i=(1-F_i)L(F_i)$ ($i=1,2$). It is easy to verify
 that the events $\{L(F_i(V_i))<t\}$ and $\big\{V_i<F_i^{-1}(1-e^{-t})\big\}$
 are equivalent for all $t>0$. Setting $T_i=\varphi(X_i)$ and $u=1-e^{-t}$,
 the assumption
 $T_1=T_2$ is equivalent to $\Pr(T_1<t)=\Pr(T_2<t)$ for all $t>0$, i.e.,
 \be
 \label{6.2}
 \int_{-\infty}^{F_1^{-1}(u)}(1-F_1(x)) L(F_1(x)) dx=
 \int_{-\infty}^{F_2^{-1}(u)}(1-F_2(x)) L(F_2(x)) dx, \ \ 0<u<1.
 \ee
 For every r.v.\ $X$ with
 d.f.\ $F$ and inverse d.f.\ $F^{-1}$, the following identity is valid
 (see, e.g., Lemma 4.1 in Papadatos, 2001):
 \be
 \label{6.3}
 \int_{-\infty}^{F^{-1}(u)} F(x)^m dx
 =m \int_{0}^u w^{m-1}\Big(F^{-1}(u)-F^{-1}(w)\Big) dw,
 \ \ 0<u<1.
 \ee
 Using \eqref{6.3} and assuming $\E X^-<\infty$, i.e.,
 $F^{-1}\in L^1(0,1-\delta)$
 for any $\delta\in (0,1)$,
 we obtain
 \begin{eqnarray}
 \nonumber
 &&
 \hspace{-7ex}
  \int_{-\infty}^{F^{-1}(u)} (1-F(x))L(F(x)) dx
  =  \sum_{m=1}^\infty \frac{1}{m}
  \int_{-\infty}^{F^{-1}(u)} (1-F(x))F(x)^m dx
  \\
  \nonumber
  &&
 =  \sum_{m=1}^\infty \int_0^{u} w^{m-1} \Big(F^{-1}(u)-F^{-1}(w)\Big)
 \Big(1-(1+1/m)w\Big) dw
 \\
 \label{6.4}
 &&
 =  \int_0^{u} \Big(F^{-1}(u)-F^{-1}(w)\Big)
 \Big(1-L(w)\Big) dw.
 \end{eqnarray}
 Define $s(w):=1-L(w)$ and
 $S(w):=\int_{0}^w s(t) dt$, so that $S'(w)=s(w)$, $0<w<1$.
 In view of \eqref{6.4}, \eqref{6.2} reads as
 \[
  h(u)=\frac{1}{S(u)}\int_{0}^{u}s(w) h(w) dw, \ \ 0<u<1,
  \]
  where $h:=F_1^{-1}-F_2^{-1}$. This relation shows that
  $h$ is absolutely continuous
  in every compact interval $[x,y]\subseteq(0,1)$, and
  \[
  h'(u)=\frac{s(u)}{S(u)}
  \left(h(u)-\frac{1}{S(u)}\int_{0}^{u}s(w) h(w) dw\right)=0, \ \
  \mbox{for almost all } \ u\in(0,1).
  \]
  Therefore, $h=c$, constant. Finally,
  from the assumption $X_1,X_2\in {\cal H}_0$, we must have
  $c=\int_{0}^1 h(u) du=0$; hence, $F_1^{-1}=F_2^{-1}$, i.e.,
  $X_1=X_2$.
 \end{proof}

 \begin{rem}
 \label{rem.6.1}
 The equation \eqref{6.4} provides an explicit expression
 for the d.f.\ of $T=\varphi(X)$ when $X\in{\cal H}_0$, namely,
 \[
  \Pr(T<t)=\int_{0}^{1-e^{-t}} \big(F^{-1}(1-e^{-t})-F^{-1}(w)\big)
  \big(1-L(w)\big) dw, \ \ t>0.
 \]
 \end{rem}

 \begin{lem}
 \label{lem.6.3} If
 $T_1,T_2\in{\cal T}$ and $\varphi'(T_1)=\varphi'(T_2)$
 then $T_1=T_2$.
 \end{lem}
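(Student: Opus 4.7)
My plan is to invert the explicit formula \eqref{6.1} directly: if the function $H_0$ produced by $\varphi'$ is determined by $T$, I aim to show that it in turn determines $T$. Rewriting the defining relation as
\[
H_0(y)+c_T=\frac{e^y}{y}F_T(y-)-\int_1^y\frac{x-1}{x^2}e^xF_T(x)\,dx,\qquad y>0,
\]
the hypothesis $\varphi'(T_1)=\varphi'(T_2)$ means the two left-continuous inverse d.f.'s agree pointwise, so $H_{0,1}(y)=H_{0,2}(y)$ for every $y>0$. Subtracting and setting $D(y):=F_{T_1}(y)-F_{T_2}(y)$ and $c:=c_{T_1}-c_{T_2}$, I obtain the Volterra-type identity
\[
\frac{e^y}{y}D(y-)=c+\int_1^y\frac{x-1}{x^2}e^xD(x)\,dx,\qquad y>0,
\]
and the task reduces to showing that this forces $D\equiv 0$.

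The next step is a short regularity bootstrap. Since $|D|\le 2$ and the integrand is bounded on every compact subinterval of $(0,\infty)$, the right-hand side above is continuous in $y$, hence so is $D(y-)$. But $D(y-)$ is left-continuous with right limit $D(y)$ (each $F_{T_i}$ being right-continuous), so continuity of $y\mapsto D(y-)$ forces $D(y-)=D(y)$ at every point: $T_1$ and $T_2$ have identical atoms, and $D$ itself is continuous. The integral in the displayed identity is then $C^1$ in $y$, whence $y\mapsto e^yD(y)/y$ is $C^1$ and so is $D$. Differentiating and using $(e^y/y)'=(y-1)e^y/y^2$, the term involving $D$ cancels from both sides and one is left with $\frac{e^y}{y}D'(y)\equiv 0$, i.e.\ $D'(y)\equiv 0$ on $(0,\infty)$.

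Hence $D$ is constant on the connected interval $(0,\infty)$; since $F_{T_i}(y)\to 1$ as $y\to\infty$, that constant must vanish, so $F_{T_1}\equiv F_{T_2}$ and $T_1=T_2$ in $\mathcal{T}$. The one step I expect to require real care is the continuity-of-$D$ argument: it exploits the asymmetry between the left-continuous term $F_T(y-)$ appearing outside the integral and the representative $F_T(x)$ appearing inside (where the choice of version is irrelevant for Lebesgue integration), together with the right-continuity convention for distribution functions, to rule out atoms of $D$ before differentiating. The remaining steps are a standard Volterra bootstrap followed by elementary calculus.
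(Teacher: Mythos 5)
Your proposal is correct and follows essentially the same route as the paper's proof: subtract the two instances of \eqref{6.1}, observe that the resulting Volterra-type identity forces the difference $F_{T_1}-F_{T_2}$ to be continuous (hence free of atoms) and then differentiable, differentiate to cancel the $\frac{y-1}{y}$ term and conclude the derivative vanishes, and finally kill the constant by letting $y\to\infty$. The only cosmetic difference is that the paper first substitutes $w(y)=e^{y}h(y)/y$ to tidy the Volterra equation before differentiating, whereas you differentiate the relation directly; the substance is identical.
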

 \begin{proof}
 Write $H_i(y)$ for $G_i(1-e^{-y})$, $y>0$, where
 $G_i$ is the inverse d.f.\ of $X_i=\varphi'(T_i)\in {\cal H}_0$
 ($i=1,2$). From the proof in Appendix \ref{app.d}
 we know that $H_i\in{\cal H}_0$. 
 Note that
 $H_i$ is the function $H_0$ given by \eqref{6.1},
 on substituting $T=T_i$ ($i=1,2$). By assumption, $H_1=H_2$.
 Thus,
 \[
  F_{T_1}(y-)-F_{T_2}(y-)= y e^{-y}
  \int_{1}^{y} \frac{x-1}{x^2} e^x \big(F_{T_1}(x)-F_{T_2}(x)\Big)dx +c y e^{-y},
  \ \
  y>0,
 \]
 where $c=c_{T_1}-c_{T_2}$ (see \eqref{cT}) and $F_{T_i}$ is the d.f.\ of $T_i$.
 Setting $h=F_{T_1}-F_{T_2}$,
 the above relation implies that $h$ is absolutely continuous
 in every compact interval $[y_1,y_2]\subseteq(0,\infty)$, so that
 $h(y-)=h(y)$ for all $y>0$, yielding
 \[
 w(y)- \int_{1}^{y} \frac{x-1}{x} w(x) dx =c,
  \ \ \
  y>0,
 \]
 where $w(y):=e^y h(y)/y$. Hence, $0=w'(y)-(y-1)w(y)/y=e^y h'(y)/y$ for almost
 all $y\in(0,\infty)$. Thus, $h'(y)=0$ and, therefore,
 $h=c_1$, constant. Finally, taking limits as $y\to\infty$,
 we conclude that $c_1=0$
 and $F_{T_1}=F_{T_2}$.
 \end{proof}

  \begin{proof}[Proof of Theorem \ref{th.6.1}]
 In view of Lemmas \ref{lem.6.2}, \ref{lem.6.3}, and
 the sufficiency proof of Theorem \ref{th.1} -- see Appendix \ref{app.d} --
 it remains to verify that $\varphi'(\varphi(X))=X$ for every $X\in{\cal H}_0$.
 If this is proved, then $\varphi(\varphi'(T))=T$ for each $T\in {\cal T}$,
 and thus, $\varphi'=\varphi^{-1}$.
 To see that $\varphi'(\varphi(X))=X$ implies
 $\varphi(\varphi'(T))=T$, set $T_1=\varphi(\varphi'(T))\in {\cal T}$.
 Then, $\varphi'(T_1)=\varphi'\Big(\varphi\big(\varphi'(T)\big)\Big)=
 \varphi'(T)$, because $\varphi'(T)\in{\cal H}_0$.
 Thus, $T_1=T$ from the one to one property of $\varphi'$
 -- Lemma \ref{lem.6.3}. Pick now $X\in {\cal H}_0$, and set
 $H(y)=G(1-e^{-y})$, $y>0$, where $G$ is the distribution inverse of $X$;
 set also $T=\varphi(X)$. From Remark \ref{rem.6.1} we have
 \[
 F_T(y-):=\Pr(T<y)=y e^{-y}H(y)-\int_0^{y} (1-w) e^{-w} H(w) dw, \ \  y>0.
 \]
 Let $X_0=\varphi'(T)$, assume that $G_0$ is the distribution inverse of
 $X_0$, and set $H_0(y)=G_0(1-e^{-y})$, $y>0$. Applying \eqref{6.1}
 to $F_T$ we find
 \begin{eqnarray*}
 H_0(y)&=&
 H(y)-\frac{e^y}{y}\int_0^{y} (1-w) e^{-w} H(w) dw
 -\int_{1}^{y} \frac{x-1}{x} H(x) dx
 \\
 &&
 +\int_{1}^{y} \frac{x-1}{x^2} e^x\int_0^{x} (1-w) e^{-w} H(w) dw dx-c_{T},
 \ \ y>0.
 \end{eqnarray*}
 The double integral can be rewritten as
 \begin{eqnarray*}
 &&
 \hspace*{-3.5ex}
 \int_{1}^{y} \frac{x-1}{x^2} e^x\int_0^{1} (1-w) e^{-w} H(w) dw dx+
 \int_{1}^{y} \frac{x-1}{x^2} e^x\int_1^{x} (1-w) e^{-w} H(w) dw dx
 \\
 &&
 =\left(\frac{e^y}{y}-e\right) \int_0^{1} (1-w) e^{-w} H(w) dw
 +
 \int_{1}^{y} (1-w) e^{-w} H(w)\int_w^{y} \frac{x-1}{x^2} e^x dx dw
 \\
 &&
 =\left(\frac{e^y}{y}-e\right) \int_0^{1} (1-w) e^{-w} H(w) dw
 +
 \int_{1}^{y} (1-w) e^{-w} H(w)\left(\frac{e^y}{y}-\frac{e^w}{w}\right) dw
 \\
 &&
 =\frac{e^{y}}{y}\int_{0}^y (1-w) e^{-w} H(w) dw
 +\int_{1}^y \frac{w-1}{w} H(w) dw-c_1,
 \end{eqnarray*}
 where $c_1=e \int_0^{1} (1-w) e^{-w} H(w) dw$, and the change in the order
 of integration
 is justified from Tonelli-Fubini, since $w\to (1-w)e^{-w}H(w)\in L^1(0,\infty)$; see
 Appendix \ref{app.d}. Thus,
 \begin{eqnarray*}
 H_0(y)-H(y)&=&
 -\frac{e^y}{y}\int_0^{y} (1-w) e^{-w} H(w) dw
 -\int_{1}^{y} \frac{x-1}{x} H(x) dx
 \\
 &&
 +\frac{e^{y}}{y}\int_{0}^y (1-w) e^{-w} H(w) dw+\int_{1}^y \frac{w-1}{w} H(w) dw
 +c_2=c_2,
 \end{eqnarray*}
 where $c_2=-c_{T}-c_1$. Since $c_2=\int_{0}^\infty e^{-y}(H_0(y)-H(y))dy=0$
 (because $X, X_0\in {\cal H}_0$), the theorem is proved.
 \end{proof}

 \section{Proofs of characterizations}
 \label{app.c}
 \noindent
 \begin{proof}[Proof of Theorem \ref{th1}{\rm (i)} and of the first part
 of Proposition \ref{prop.phi}]
 Suppose that $\rho_n=\E R_n$ for all $n$ and
 some $X\in{\cal H}^*$ which
 has d.f. $F$. Then,
 \[
 \rho_n=\int_{-\infty}^{\infty} (I(x>0)-F_n(x)) dx\in\RR,
 \]
 and from \eqref{5} we see that
 \be
 \label{7}
 \rho_{n+1}-\rho_{n}=\int_{-\infty}^{\infty} (F_n(x)-F_{n+1}(x)) dx
 =\frac{1}{n!}\int_{\alpha}^{\omega} (1-F(x)) L(F(x))^n dx, \ \ n=1,2,\ldots,
 \ee
 where $\alpha=\inf\{x:F(x)>0\}$, $\omega=\sup\{x:F(x)<1\}$. Note that
 $F_n(x)-F_{n+1}(x)=0$ for $x\not\in[\alpha,\omega)$. Since
 $\alpha<\omega$ (because $F$ is non-degenerate), the above relation
 shows that
 \[
 \rho_2-\rho_1=\int_{\alpha}^{\omega} (1-F(x)) L(F(x)) dx>0,
 \]
 because $(1-F(x)) L(F(x))>0$ for $x\in(\alpha,\omega)$.
 It follows that the function
 \[
 f_V(x):=\left\{
 \begin{array}{ll}
 (1-F(x))L(F(x))/(\rho_2-\rho_1), & \alpha<x<\omega,
 \\
 0, & \mbox{otherwise},
 \end{array}
 \right.
 \]
 defines a
 Lebesgue density of an absolutely continuous r.v.\ $V$ with support
 $(\alpha,\omega)$. Setting $T:=-\log(1-F(V))=L(F(V))$ we see that
 $0<T<\infty$ w.p.\ $1$ (because $\alpha<V<\omega$
 so that $0<F(V)<1$ w.p.\ $1$).
 Thus, we can rewrite
 \eqref{7}
 as
 \[
 \frac{\rho_{n+2}-\rho_{n+1}}{\rho_2-\rho_1}
 =\frac{1}{(n+1)!}\int_{\mathds R} f_V(x) L(F(x))^n dx
 =\frac{\E L(F(V))^n}{(n+1)!}, \ \ n=0,1,\ldots,
 \]
 and \eqref{6} is proved with $T=L(F(V))=\varphi(X)$; this also verifies
 the first counterpart of Proposition \ref{prop.phi}, i.e., that
 $\varphi$ has domain ${\cal H}^*$ and takes values into ${\cal T}$.
 The fact that $\varphi(X_1)=\varphi(\lambda X_1+c)$ (when
 $X_1\in{\cal H}^*$, $c\in\RR$, $\lambda>0$) is trivial.
 \end{proof}


 \begin{proof}[Proof of Theorem \ref{th.6.2}]
 Assume first that $X\in {\cal H}^*$ is characterized from
 $\{\rho_n\}_{n=1}^\infty$, where $\rho_n=\E R_n$, and
 set $X_0=(X-\rho_1)/(\rho_2-\rho_1)\in{\cal H}_0$.
 If $T=\varphi(X)=\varphi(X_0)\in{\cal T}$ is not characterized
 from its moments, then we can find an r.v.\ $T_1\in{\cal T}$, $T_1\neq T$,
 such that $\E T_1^n=\E T^n$ for all $n\in\{0,1,\ldots\}$. Then,
 from Theorem \ref{th1}(ii),
 the r.v.\
 $X_1:=\varphi^{-1}(T_1)\in {\cal H}_0$ possesses the same expected record
 sequence as $X_0$, and, thus, the r.v.\ $X':=\rho_1+(\rho_2-\rho_1)X_1$
 has the same ERS as $X$. Since $\varphi^{-1}$ is one to one and
 $T_1\neq T$, it follows that $X_1\neq X_0$ and, consequently,
 $X'\neq X$, which contradicts the assumption that $X$ is characterized from
 its ERS.

 Next, assume that $T=\varphi(X)\in {\cal T}$ is characterized from its moments.
 Suppose, in contrary, that $X\in{\cal H}^*$ is not characterized from its ERS.
 Then, we can find
 an r.v.\ $X'\in{\cal H}^*$, $X'\neq X$, with the same ERS as $X$. Obviously,
 if $\{\rho_n\}_{n=1}^\infty$ is the common ERS, both r.v.'s
 $X_0:=(X-\rho_1)/(\rho_2-\rho_1)$ and $X'_0:=(X'-\rho_1)/(\rho_2-\rho_1)$
 belong to ${\cal H}_0$ and posses the same expected record sequence, while
 $X'_0\neq X'$. From
 Theorem \ref{th1}(i), see Appendix \ref{app.d}, it follows that the r.v.'s $T$
 and
 $T':=\varphi(X'_0)=\varphi(X')\in{\cal T}$ posses identical moments. However, since
 $X_0'\neq X_0$ and $X_0,X_0'\in{\cal H}_0$,
 it follows that $T'=\varphi(X'_0)\neq \varphi(X_0)=T$,
 and this contradicts the
 assumption that $T$ is characterized from its moments.
 \end{proof}

 \begin{proof}[Proof of Theorem \ref{th.6.3}]
 Without loss of generality assume that $X\in{\cal H}_0$
 (equivalently, $H\in{\cal H}_0$ where $H(y):=F_X^{-1}(1-e^{-y})$, $y>0$)
 has ERS $\{\rho_1,\rho_2,\rho_3,\ldots\}=\{0,1,\rho_3,\ldots\}$.

 Suppose that (ii) is satisfied. Then, we can find $t_0\in(0,1)$ such that
 $G_{\rho}(t_0)=t_0+\rho_3 t_0^2+\cdots<\infty$; note that $\rho_n>0$
 for all $n\geq 2$. Moreover,
 \[
  w_n=\E |R_n|=\int_{0}^{\infty} \frac{y^{n-1}}{(n-1)!} e^{-y}\big|H(y)\big|dy
  = \rho_n + 2 \int_{0}^{m} \frac{y^{n-1}}{(n-1)!} e^{-y}\big|H(y)\big|dy,
 \]
 where $m:=\sup\{y>0:H(y)\leq 0\}\in(0,\infty)$, because $H$ is non-decreasing
 and $\int_{0}^\infty e^{-y}H(y)dy=0$. This verifies that
 $G_w(t_0)<\infty$, since $y\to e^{-y}H(y)\in L^1(0,\infty)$,
 $y\to e^{t_0 y}$ is bounded
 in $[0,m]$ and
 \[
 G_{w}(t_0)= G_\rho(t_0)+2 \int_{0}^m e^{t_0y}e^{-y}\big|H(y)\big|dy.
 \]
 It follows that
 $\int_{0}^\infty e^{t_0x}e^{-y}\big|H(x)\big|dx=G_{w}(t_0)<\infty$,
 and thus,
 \[
 \lim_{y\to\infty} \beta(y):=\lim_{y\to\infty} \int_{y}^\infty
 e^{t_0 x}e^{-x}\big|H(x)\big|dx=0.
 \]
 Since $H(x)$
 is non-decreasing and non-negative in $(m,\infty)$, we obtain
 \[
 \beta(y)\geq H(y) \int_{y}^\infty e^{-(1-t_0)x} dx
 =H(y) \frac{e^{-(1-t_0)y}}{1-t_0}, \ \ y>m,
 \]
 and hence, $H(y)=|H(y)|\leq C e^{(1-t_0)y}$, $y>m$,
 where $C=(1-t_0)\beta(m)<\infty$.
 It follows that
 \[
 \int_{m}^\infty e^{-y} \big|H(y)\big|^{1+\delta} dy \leq
 C^{1+\delta} \int_{m}^\infty \exp\Big[-y\Big(1-(1-t_0)(1+\delta)\Big)\Big] dy
 <\infty
 \]
 for $0\leq \delta<t_0/(1-t_0)$. Since
 $\E (X^+)^p=\int_{m}^\infty e^{-y} |H(y)|^p dy$, (i) is deduced.
 Fix now $\delta_0\in (0,t_0/(1-t_0))$ and $a>0$.
 Then, in view of \eqref{6},
 we have
 \[
 M_T(a)
 =\sum_{n=0}^\infty \frac{a^n}{n!} \E T^n
 =\sum_{n=0}^\infty \frac{a^n}{n!} (n+1)! (\rho_{n+2}-\rho_{n+1})
  \leq\sum_{n=0}^\infty (n+1)(w_{n+2}+w_{n+1})a^n.
 \]
 The last sum equals to
 \[
 \int_{0}^\infty \left(\sum_{n=0}^{\infty}
 \frac{(y+n+1)(ay)^n}{n!}\right)e^{-y} \big|H(y)\big|  dy
 =
 \int_{0}^\infty (y+1+ay)e^{ay}e^{-y} \big|H(y)\big|  dy.
 \]
 Since the function $y\to (y+1+ay)e^{ay}$ is bounded for
 $y\in[0,m]$, and $y\to e^{-y}H(y)\in L^1{(0,\infty)}$,
 the integral $\int_{0}^m (y+1+ay)e^{ay}e^{-y} |H(y)|  dy$
 is finite for all $a$. Furthermore,
 \[
 I:=\int_{m}^\infty (y+1+ay)e^{ay}e^{-y} \big|H(y)\big| dy
 =
 \int_{m}^\infty \Big(e^{-y/(1+\delta_0)} \big|H(y)\big|\Big)
 \Big((y+1+ay)e^{-\kappa y}\Big) dy,
 \]
 where $\kappa:=\delta_0/(1+\delta_0)-a>0$ if $a\in(0,\delta_0/(1+\delta_0))$.
 An application
 of H\"{o}lder's inequality (with $p=1+\delta_0$, $q=1+1/\delta_0$)
 to the last integral yields
 \[
 I\leq
 \left(\int_{m}^\infty e^{-y}
 \big|H(y)\big|^{1+\delta_0} dy \right)^{1/(1+\delta_0)}
 \left(\int_{m}^\infty (y+1+ay)^{1+1/\delta_0}
 e^{-\lambda y} dy\right)^{\delta_0/(1+\delta_0)},
 \]
 where $\lambda:=(1+1/\delta_0)\kappa>0$. Hence, $I$ is finite
 and, consequently, $M_T(a)<\infty$, for any $a<\delta_0/(1+\delta_0)$.

 So far, we have shown that (ii)$\Rightarrow$(iii)$\Rightarrow$(i)$\Rightarrow$(iv).
 The remaining implication, (iv)$\Rightarrow$(ii),
 is a by-product of Theorem \ref{th.1}. Indeed, if
 $M_T(a)=\E e^{aT}$
 is finite
 for some $a\in(0,1)$, then \eqref{6} shows that
 \[
 G_{\rho}(a)
 =\sum_{n\geq 2} a^{n-1}\sum_{j=0}^{n-2}\frac{\E T^j}{(j+1)!}
 =\sum_{j\geq 0} \frac{\E T^j}{(j+1)!} \sum_{n\geq {j+2}} a^{n-1}
 =\frac{1}{1-a}\E\Big[\frac{e^{a T}-1}{T}\Big].
 \]
 Since $(e^{a t}-1)/t<a e^{at}$ for $t>0$, (ii) is proved.
 Finally, from the preceding  calculation,
 \[
 G_{\rho}(t)=
 \frac{1}{1-t}\E \int_{0}^t e^{sT} ds=
 \frac{1}{1-t}\int_{0}^t \E e^{sT} ds,
 \ \ \ 0<t\leq a,
 \]
 and \eqref{6.8} is deduced.

 Obviously, the results extend to a
 $t$-interval $-\epsilon_0<t<\epsilon_0$ by analytic continuation.
 \end{proof}


 \section{Construction of $X_0\in{\cal H}_0$ from $T\in{\cal T}$}
 \label{app.d}
 \setcounter{equation}{0}

 We shall provide a detailed proof of Theorem \ref{th1}(ii), which also
 verifies the half counterpart of Proposition \ref{prop.phi}, showing
 that the mapping $\varphi'$ is well-defined with
 domain ${\cal T}$ and range into ${\cal H}_0$, as stated. We notice
 that the present appendix is self-contained; it does not require any
 further results from the present article.

 Suppose we are given an r.v.\ $T\in{\cal T}$ with d.f.\ $F_T$, i.e.,
 $F_T(0)=0$ and $\E T^n<\infty$ for all $n$.
 Define
 \be
 \label{H}
 H(y):=H_0(y)+c_T-e F_T(1-), \ \ y>0,
 \ee
 where $H_0$ is as in \eqref{6.1} and $c_T$ as in \eqref{cT},
 and rewrite \eqref{6.1} as
 \be
 \label{4.2}
 H(y)=\left\{
 \begin{array}{l}
 -e\big[F_T(1-)-F_T(y-)\big]-\int_{y}^{1} \frac{1-x}{x^2}
 e^x \big[F_T(x)-F_T(y-)\big]  dx,
 \\
 \hfill 0<y\leq 1,
 \\
 \vspace{-2ex}
 \\
 e\big[F_T(y-)-F_T(1-)\big]+\int_{1}^{y} \frac{x-1}{x^2}
 e^x \big[F_T(y-)-F_T(x)\big] dx,
 \\
 \hfill 1\leq y<\infty.
 \end{array}
 \right.
 \ee
 From \eqref{4.2} we see that
 $H(y)\leq 0$ for $y\leq 1$ and $\geq 0$ otherwise.

 \begin{lem}
 \label{lem.1}
 $H$ is non-decreasing and left-continuous.
 \end{lem}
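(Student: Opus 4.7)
The plan is to verify monotonicity and left-continuity of $H$ separately on the two branches $(0,1]$ and $[1,\infty)$ appearing in \eqref{4.2}, and then glue them across the boundary $y=1$. Crossing the boundary is automatic: the text immediately following \eqref{4.2} observes that $H\leq 0$ on $(0,1]$ and $H\geq 0$ on $[1,\infty)$, so for any $y_1\leq 1\leq y_2$ we trivially have $H(y_1)\leq 0\leq H(y_2)$. Thus the work reduces to internal monotonicity on each branch plus left-continuity everywhere, including at $y=1$.

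For monotonicity on $(0,1]$, I would fix $0<y_1<y_2\leq 1$ and compute $H(y_2)-H(y_1)$ directly from the upper branch of \eqref{4.2}. Splitting $\int_{y_1}^{1}=\int_{y_1}^{y_2}+\int_{y_2}^{1}$ and collecting like terms, the difference rearranges into
\[
 e\,[F_T(y_2-)-F_T(y_1-)]
+\int_{y_1}^{y_2}\tfrac{1-x}{x^{2}}e^{x}[F_T(x)-F_T(y_1-)]\,dx
+\int_{y_2}^{1}\tfrac{1-x}{x^{2}}e^{x}[F_T(y_2-)-F_T(y_1-)]\,dx,
\]
and each summand is non-negative because $F_T$ is non-decreasing and $(1-x)/x^{2}\geq 0$ on $(0,1]$. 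A symmetric decomposition on $[1,\infty)$ produces, for $1\leq y_1<y_2$, a sum of three non-negative pieces, now using $(x-1)/x^{2}\geq 0$ on $[1,\infty)$ and the elementary inequality $F_T(x)\leq F_T(y_2-)$ valid for $x<y_2$.

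For left-continuity, the key fact is that $y\mapsto F_T(y-)=\Pr(T<y)$ is left-continuous on $(0,\infty)$, since $\{T<y_n\}\uparrow\{T<y\}$ whenever $y_n\uparrow y$. At any interior point of $(0,1]$ or of $[1,\infty)$, I would take $y_n\uparrow y$ in the relevant branch of \eqref{4.2}; the point-value terms then converge by this left-continuity, and the integrals converge by dominated convergence, since the integrand is bounded on the compact integration interval (which stays away from $x=0$) and either the integration range contracts or $F_T(y_n-)$ appears as a global factor tending to $F_T(y-)$. At the boundary $y=1$ I would apply the upper branch: the first summand $-e[F_T(1-)-F_T(y_n-)]$ vanishes as $y_n\uparrow 1$, and the second vanishes because its range $[y_n,1]$ shrinks to a point while the integrand is uniformly bounded, matching $H(1)=0$.

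The principal bookkeeping concern is keeping $F_T(y-)$ distinct from $F_T(y)$ at possible atoms of $T$ and tracking the sign flip between $1-x$ and $x-1$ at $x=1$, so that the displayed decompositions really have the signs claimed. Once these rearrangement identities are in place, both assertions follow routinely; the only place a reader might want extra care is verifying that the point-value term at $y=1^-$ indeed cancels against the shrinking integral, but both go to zero separately, so left-continuity at the gluing point is no harder than at interior points.
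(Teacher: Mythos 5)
Your proof is correct and takes essentially the same route as the paper: the paper's identity \eqref{mono} is exactly your three-term decomposition on $(0,1]$ with the first and third summands recombined into $\frac{e^{y_2}}{y_2}\Pr(y_1\leq T<y_2)$ (since $\int_{y_2}^{1}\frac{1-x}{x^2}e^x\,dx=\frac{e^{y_2}}{y_2}-e$), and the case $[1,\infty)$ is handled by the analogous regrouping. The only difference is that the paper declares left-continuity ``obvious'' where you spell out the left-continuity of $y\mapsto F_T(y-)$ and the dominated-convergence details, including the gluing at $y=1$; all of that checks out.
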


 \begin{proof}
 Left-continuity is obvious. Also, $H$ is non-positive in $(0,1]$
 and non-negative in $[1,\infty)$. Choose now $y_1,y_2$ with
 $0<y_1<y_2\leq 1$.
 Then,
 \be
 \label{mono}
 H(y_2)-H(y_1)=\frac{e^{y_2}}{y_2}\Pr(y_1\leq T<y_2)+
 \int_{y_1}^{y_2} \frac{1-x}{x^2}e^x \Pr(y_1\leq T\leq x) dx \geq 0.
 \ee
 A similar argument applies to the case $1\leq y_1<y_2<\infty$.
 \end{proof}

 \begin{lem}
 \label{lem.2} {\rm (i)} For all $x\geq 0$,
 \be
 \label{m}
 \int_{0}^{x} e^{-y}\Big[F_T(x)-F_T(y)\Big] dy
 =\int_{(0,x]} (1-e^{-t})  d F_T(t).
 \ee

 \noindent
 {\rm (ii)}
 The function $y\to e^{-y}H(y)\in L^{1}(0,m)$ for any finite $m>0$.
 \end{lem}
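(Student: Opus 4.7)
For part (i), the plan is a one-step application of Tonelli's theorem. I would write $F_T(x) - F_T(y) = \int_{(y, x]} dF_T(t)$ for $0 \leq y < x$, substitute into the left-hand integral, and interchange the order of integration, which is justified because $F_T$ is a finite measure and the integrand is non-negative. For each fixed $t \in (0, x]$ the set $\{y \in [0, x]: y < t\}$ is $[0, t)$, so the inner integral reduces to $\int_0^t e^{-y}\, dy = 1 - e^{-t}$, giving the right-hand side.

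For part (ii), I would fix $m > 0$ and split the integral at $y = 1$ (treating the interval $[1, m)$ as empty when $m \leq 1$). On $[1, m)$ the formula \eqref{4.2} together with the trivial estimate $F_T(y-) - F_T(x) \leq 1$ yields $|H(y)| \leq e + \int_1^y (x-1) e^x / x^2\, dx = e^y / y$, so that $e^{-y} |H(y)| \leq 1/y$, which is clearly integrable on $[1, m)$.

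The main obstacle lies in the interval $(0, 1]$, since the kernel $(1-x)/x^2$ appearing in \eqref{4.2} is singular at the origin, and naive bounds on the inner integrand yield a divergent contribution. The plan is to split the non-negative bound from \eqref{4.2} into its two pieces: the first piece $e[F_T(1-) - F_T(y-)]$ integrates against $e^{-y}\, dy$ to at most $e F_T(1-)$. For the second piece, an application of Tonelli to swap the $x$ and $y$ integrals produces an inner integral $\int_0^x e^{-y}[F_T(x) - F_T(y-)]\, dy$, which is exactly the object handled by part (i) and equals $\int_{(0, x]}(1 - e^{-t})\, dF_T(t)$ (noting that $F_T(y-) = F_T(y)$ for all but countably many $y$). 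The essential cancellation is supplied by the elementary inequality $1 - e^{-t} \leq t$, which absorbs the $1/x$ worth of singularity. A second Tonelli exchange together with the closed-form identity $\int_t^1 (1-x) e^x / x^2\, dx = e^t/t - e$ then reduces the remaining quantity to $\int_{(0, 1]}(e^t - et)\, dF_T(t)$, manifestly bounded by $e F_T(1) < \infty$. Combining both halves establishes $e^{-y} H(y) \in L^1(0, m)$.
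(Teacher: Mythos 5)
Your proof is correct and follows the same overall strategy as the paper's: part (i) is an order-of-integration identity, and part (ii) tames the $x^{-2}$ singularity of the kernel in \eqref{4.2} by a Tonelli swap that reduces the inner $y$-integral to the quantity computed in part (i), after which the antiderivative $e^x/x$ of $(x-1)e^x/x^2$ does the rest. The execution differs in two minor ways. For (i), the paper computes $\E\big[(1-e^{-T})I(T\le x)\big]$ twice, once via the survival function and once directly, and then substitutes $y=-\log(1-u)$; your direct Tonelli interchange is more elementary and equally rigorous (identifying the inner region as $[0,t)$ and using $F_T(y-)=F_T(y)$ for a.e.\ $y$ are exactly the points requiring care, and you address both). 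For (ii), the paper splits at an arbitrary $\delta\in(0,1)$ and disposes of $[\delta,m]$ by invoking the monotonicity of $H$ (Lemma \ref{lem.1}) to get boundedness there, whereas you split at $y=1$ and extract the explicit bound $0\le H(y)\le e^y/y$ on $[1,m)$ directly from \eqref{4.2}; near the origin you finish with $1-e^{-t}\le t$, arriving at $\int_{(0,1]}(e^t-et)\,dF_T(t)$, while the paper keeps $(e^t-1)/t$, which is likewise bounded on $(0,1]$. Both variants are valid; yours has the small advantage of being self-contained in not relying on Lemma \ref{lem.1}.
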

 \begin{proof}
 Consider the non-negative random variable
 $Y:=(1-e^{-T}) I(T\leq x)=g(T)$, for
 which it is easily verified that
 $\Pr(Y>u)=F_T(x)-F_T(-\log(1-u))$ for $0\leq u<1-e^{-x}$, and $\Pr(Y>u)=0$ for
 $u\geq 1-e^{-x}$. Then, we can compute the expectation of $Y$ by means of two
 different integrals, namely,
 \[
 \E Y = \int_{0}^{1-e^{-x}} \Big[F_T(x)-F_T(-\log(1-u))\Big] du, \ \
 \ \
 \E
 g(T)
 = \int_{(0,x]} (1-e^{-t})  d F_T(t).
 \]
 The integrals above are equal, and the substitution $-\log(1-u)=y$
 yields \eqref{m}.
 Fix now $\delta\in(0,1)$.
 From \eqref{4.2} we have
 \[
 \int_0^{\delta} e^{-y}|H(y)|  dy\leq e + \int_{0}^\delta
 e^{-y}\int_{y}^{1} \frac{1-x}{x^2}
 e^x \Big[F_T(x)-F_T(y)\Big]dxdy=e+I, \ \ \mbox{say},
 \]
 noting that
 \[
 e^{-y}\frac{1-x}{x^2}
 e^x \Big[F_T(x)-F_T(y-)\Big]
 =
 e^{-y}\frac{1-x}{x^2}
 e^x \Big[F_T(x)-F_T(y)\Big]
 \]
 for almost all  $(y,x)\in (0,\infty)\times(0,1)$.
 Interchanging the order of integration
 according to Tonelli's
 theorem, we get
 \begin{eqnarray*}
 I & = & \int_{0}^{\delta} \frac{1-x}{x^2} e^x \int_{0}^{x}
 e^{-y} \Big[F_T(x)-F_T(y)\Big]  dy dx \\
 && +
 \int_{\delta}^1 \frac{1-x}{x^2} e^x \int_{0}^{\delta}
 e^{-y} \Big[F_T(x)-F_T(y)\Big]  dy dx=I_1+I_2.
 \end{eqnarray*}
 Obviously, $I_2$ is finite and it remains to verify that $I_1<\infty$.
 In view of  \eqref{m}
 we obtain
 \[
 I_1=\int_{0}^{\delta}
 \frac{1-x}{x^2} e^x \int_{(0,x]} (1-e^{-t}) d F_T(t)dx
 = \int_{(0,\delta]}
 (1-e^{-t})
 \int_{t}^{\delta} \frac{1-x}{x^2} e^x dx d F_T(t).
 \]
 The last equation shows that $I_1$ is finite, because the inner
 integral is less than $e^t/t$.
 Thus,
 \[
 I_1\leq \int_{(0,\delta]}
 \frac{e^t-1}{t}  d F_T(t) = \E\Big[\frac{e^T-1}{T}I(T\leq \delta)\Big]
 \]
 and the function $T\to (e^T-1)I(T\leq \delta)/T$ is (non-negative and)
 bounded. Finally, since $|H|$ is bounded in $[\delta, m]$
 (see Lemma \ref{lem.1}), the lemma is proved.
 \end{proof}

 \begin{lem}
 \label{lem.3}
 Define
 \be
 \label{4.3}
 a_k(t):=\int_{t}^{\infty}u^k e^{-u}  du=
 k! e^{-t}\sum_{j=0}^{k}\frac{t^j}{j!},  \ \ \ \
 \ t\geq0, \ \ k=0,1,\ldots \ .
 \ee
 Then,
 \be
 \label{4.4}
 \int_x^\infty y^k e^{-y} \Big[F_T(y)-F_T(x)\Big] dy =
 \int_{(x,\infty)}a_k(t) dF_T(t),  \ \ x\geq0, \ \ k=0,1,\ldots \ .
 \ee
 \end{lem}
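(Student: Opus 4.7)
The plan is straightforward: the statement has two parts, namely the closed form for $a_k(t)$ and the integration-by-parts type identity \eqref{4.4}, and both reduce to standard manipulations with no measurability subtleties because every integrand is non-negative.

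First, to establish the closed form for $a_k(t)$, I would proceed by induction on $k$. The base case $a_0(t)=e^{-t}$ is immediate. For the inductive step I would integrate by parts in $\int_t^\infty u^k e^{-u}du$, differentiating $u^k$ and integrating $e^{-u}$, to obtain the recursion $a_k(t)=t^k e^{-t}+k\,a_{k-1}(t)$. Unfolding this recursion and using the induction hypothesis yields $a_k(t)=k!\,e^{-t}\sum_{j=0}^{k}t^j/j!$, as claimed. Alternatively, one may simply recognize $a_k(t)/k!$ as the survival function at $t$ of a Gamma$(k+1,1)$ variate and quote the Poisson--Erlang identity used earlier in Appendix \ref{app.a}, but the induction is cleaner and self-contained.

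Second, for \eqref{4.4}, the key observation is
\[
F_T(y)-F_T(x)=\int_{(x,y]} dF_T(t),\qquad y\geq x,
\]
so that the left-hand side of \eqref{4.4} becomes the iterated integral
\[
\int_{x}^{\infty}\!\!\int_{(x,y]} y^k e^{-y}\, dF_T(t)\, dy.
\]
Since the integrand $y^k e^{-y}\,\mathbf{1}_{\{x<t\leq y\}}$ is non-negative and jointly measurable in $(y,t)\in(x,\infty)\times(x,\infty)$, Tonelli's theorem applies and permits interchanging the order of integration, with the region $\{(y,t):x<t\leq y,\ y>x\}$ rewritten as $\{(y,t):t>x,\ y\geq t\}$. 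This gives
\[
\int_{(x,\infty)} \Bigl(\int_{t}^{\infty} y^k e^{-y}\,dy\Bigr)\, dF_T(t)
=\int_{(x,\infty)} a_k(t)\, dF_T(t),
\]
which is the desired identity.

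There is no real obstacle here: non-negativity makes Tonelli trivial to justify, and the closed form for $a_k(t)$ is a textbook computation. The one minor point worth stating explicitly in the write-up is that both sides of \eqref{4.4} are automatically well defined (possibly $+\infty$) without any integrability hypothesis; in the regime where the lemma will actually be applied, finiteness will follow from the assumption $T\in{\cal T}$, i.e.\ $\E T^k<\infty$ for every $k$, since $a_k(t)\leq k!\,e^{-t}(1+t)^k\leq C_k(1+t^k)$ and hence $\int a_k\, dF_T\leq C_k(1+\E T^k)<\infty$.
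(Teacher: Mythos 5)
Your proof is correct, but it takes a genuinely different route from the paper's. You write $F_T(y)-F_T(x)=\int_{(x,y]}dF_T(t)$ and invoke Tonelli on the non-negative integrand over the region $\{x<t\leq y\}$, collapsing the inner $dy$-integral to $a_k(t)$. The paper instead computes $\E\big[a_k(T)I(T>x)\big]$ in two ways: once directly as $\int_{(x,\infty)}a_k(t)\,dF_T(t)$, and once via the layer-cake formula $\E Y=\int_0^\infty\Pr(Y>y)\,dy$ applied to $Y=a_k(T)I(T>x)$, which after the substitution $t=a_k^{-1}(y)$ (using that $a_k$ is strictly decreasing and that $F_T(\cdot-)=F_T(\cdot)$ Lebesgue-a.e.) yields the left-hand side of \eqref{4.4}; this is the same device the paper uses in Lemma \ref{lem.2}(i). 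The two arguments are of comparable length and both are rigorous; yours is the more standard measure-theoretic one and avoids the inverse-function substitution and the a.e.-continuity remark, while the paper's stays entirely in probabilistic language and reuses a template already set up for Lemma \ref{lem.2}. Your closing observations --- the inductive verification of the closed form for $a_k$ (equivalently the Poisson--Erlang identity already quoted in Appendix \ref{app.a}) and the remark that both sides of \eqref{4.4} are well defined in $[0,\infty]$ with finiteness following from $\E T^k<\infty$ when $T\in{\cal T}$ --- are accurate and consistent with how the lemma is subsequently applied.
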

 \begin{proof}
 $a_k$ is strictly decreasing
 with $a_k(0)=k!$ and $a_k(\infty)=0$. Fix $x\geq 0$ and consider the
 bounded
 non-negative
 r.v.\ $Y:=a_k(T)I(T> x)$. Then, $\Pr(Y> y)=F_T(a_k^{-1}(y)-)-F_T(x)$
 for $0\leq y<a_k(x)$, and $\Pr(Y>y)=0$ for $y\geq a_k(x)$, where
 $a_k^{-1}$ is the (usual) inverse function of $a_k$.
 Since $F_T(a_k^{-1}(y)-)=F_T(a_k^{-1}(y))$ for almost all $y\in (0,\infty)$,
 we obtain
 \[
 \E Y = \int_{0}^{a_k(x)} \Big[F_T(a_k^{-1}(y))-F_T(x)\Big]  dy
 = \int_{x}^{\infty} t^k e^{-t} \Big[F_T(t)-F_T(x)\Big]  dt,
 \]
 where we made use of the substitution $t=a_k^{-1}(y)$. On the other hand,
 \[
 \E Y=\E\Big[ a_k(T) I(T>x)\Big]=\int_{(x,\infty)} a_k(t)  dF_T(t),
 \]
 and \eqref{4.4} is proved.
 \end{proof}

 \begin{lem}
 \label{lem.4}
 $\int_0^\infty y^k e^{-y}|H(y)|  dy<\infty$ for $k=0,1,\ldots$ \ .
 \end{lem}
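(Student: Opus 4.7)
The plan is to split the integral at $y=1$ and handle the two pieces separately. For $\int_0^1 y^k e^{-y}|H(y)|\,dy$, since $y^k \leq 1$ on $(0,1]$, the integral is dominated by $\int_0^1 e^{-y}|H(y)|\,dy$, which is finite by Lemma \ref{lem.2}(ii) with $m=1$; this part requires no further work.

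For the tail $\int_1^\infty y^k e^{-y}|H(y)|\,dy$, I would use that $|H(y)|=H(y)$ on $[1,\infty)$ by Lemma \ref{lem.1}, and invoke the second line of \eqref{4.2}. The linear summand $e[F_T(y-)-F_T(1-)]$ is bounded above by $e$, contributing at most $e\int_1^\infty y^k e^{-y}\,dy = e\,a_k(1)<\infty$. For the double-integral summand, an application of Tonelli's theorem (valid since the integrand is non-negative on $\{1\leq x\leq y\}$) swaps the order of integration to
\[
\int_1^\infty \frac{x-1}{x^2}e^x\int_x^\infty y^k e^{-y}[F_T(y)-F_T(x)]\,dy\,dx,
\]
where $F_T(y-)$ has been replaced by $F_T(y)$ since the two agree for all but countably many $y$. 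Lemma \ref{lem.3} now rewrites the inner integral as $\int_{(x,\infty)} a_k(t)\,dF_T(t)$, and a second Tonelli swap, together with $\int_1^t\frac{x-1}{x^2}e^x\,dx = e^t/t - e$, yields
\[
\int_{(1,\infty)} a_k(t)\!\left(\frac{e^t}{t}-e\right)dF_T(t)\;\leq\;\E\!\left[\frac{e^T a_k(T)}{T}\,I(T>1)\right].
\]

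To conclude, I would invoke the explicit form $a_k(t)=k!\,e^{-t}\sum_{j=0}^k t^j/j!$ from \eqref{4.3}, which gives $e^t a_k(t)/t = k!\sum_{j=0}^k t^{j-1}/j!$. For $t\geq 1$ this is bounded by a polynomial in $t$ of degree at most $k$, so the last expectation is dominated by a finite linear combination of the moments $\E T^j$, $0\leq j\leq k$, all of which are finite by the assumption $T\in{\cal T}$.

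The main obstacle is essentially bookkeeping: the crude majorization $F_T(y-)-F_T(x)\leq 1$ in \eqref{4.2} yields only $H(y)\leq e^y/y$ at infinity, which is far too weak for integration against $y^k e^{-y}$. The device that saves the argument is the pair of Tonelli swaps, which transfer the weight from Lebesgue measure to $dF_T$, where the moments of $T$ are available to absorb polynomial growth. Once the swaps are set up, finiteness reduces to the identity recorded in Lemma \ref{lem.3}.
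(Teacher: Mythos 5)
Your proof is correct and follows essentially the same route as the paper's: split the integral, bound the near-origin piece via Lemma \ref{lem.2}(ii), and for the tail apply two Tonelli swaps together with Lemma \ref{lem.3} so that $\int_1^t\frac{x-1}{x^2}e^x\,dx=e^t/t-e$ converts the bound into a finite combination of moments of $T$. The only (harmless) difference is that you split at $y=1$ while the paper splits at an arbitrary $m>1$, which costs it one extra, trivially finite, sub-integral.
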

 \begin{proof}
 Fix $k\in\{0,1,\ldots\}$, $m\in (1,\infty)$, and write
 \[
 \int_0^\infty y^k e^{-y}|H(y)| dy = \int_{0}^{m}
 +\int_{m}^{\infty} y^k e^{-y}|H(y)| dy =I_1+I_2.
 \]
 Lemma \ref{lem.2}(ii)
 shows that
 $I_1$ is finite,
 and we proceed to verify that $I_2$ is also finite.
 Using \eqref{4.2} we have
 \[
 I_2\leq e k! +
 \int_{m}^\infty \int_{1}^{y} y^k e^{-y}
 \frac{x-1}{x^2} e^x \Big[F_T(y)-F_T(x)\Big] dx dy=e k! +I_3,
 \]
 noting that
 \[
 y^k e^{-y}\frac{x-1}{x^2} e^x \Big[F_T(y-)-F_T(x)\Big]
 =y^k e^{-y} \frac{x-1}{x^2} e^x \Big[F_T(y)-F_T(x)\Big]
 \]
 for almost all $(y,x)\in(m,\infty)\times(1,\infty)$.
 It remains to show $I_3<\infty$.
 Using Tonelli's
 theorem,
 \begin{eqnarray*}
 I_3
 &=&
 \int_1^{m} \frac{x-1}{x^2} e^x
 \int_{m}^{\infty} y^k e^{-y}
 \Big[F_T(y)-F_T(x)\Big] dy dx
 \\
 &&
 +
 \int_{m}^\infty \frac{x-1}{x^2} e^x
 \int_{x}^{\infty} y^k e^{-y}
 \Big[F_T(y)-F_T(x)\Big] dy dx=J_1+J_2.
 \end{eqnarray*}
 Now $J_1$ is obviously finite, because the inner integral
 is less that $k!$ and the function $x\to (x-1)e^x/x^2$ is bounded
 for $x\in[1,m]$. Applying Lemma \ref{lem.3} to the inner integral
 in $J_2$ we obtain
 \[
 J_2=\int_{m}^\infty \frac{x-1}{x^2} e^x
 \int_{(x,\infty)} a_k(t) dF_T(t) dx
 =\int_{(m,\infty)} a_k(t) \int_{m}^t
 \frac{x-1}{x^2} e^x dx dF_T(t).
 \]
 Therefore, since the inner integral is less than $e^t/t$,
 and
 \[
 \frac{e^t}{t} a_k(t)=k!\left(
 \frac{1}{t}+\sum_{j=0}^{k-1} \frac{t^j}{(j+1)!}\right)\leq
 \frac{k!}{m}+k!\sum_{j=0}^{k-1} \frac{t^j}{(j+1)!}, \ \ \ t> m,
 \]
 see \eqref{4.3}, we arrive at the inequality
 $J_2
 \leq k!/m
 +
 k! \sum_{j=0}^{k-1} \E T^j/(j+1)!$,
 and this is finite because $T$ has been assumed to
 possess finite moments of any order.
 \end{proof}

 From Lemmas \ref{lem.1}, \ref{lem.4} we conclude that
 $H\in {\cal H}$, so that $H_0\in {\cal H}$
 (since these functions differ by a constant--see \eqref{H}).
 We now proceed to show that $H_0\in{\cal H}_0$.

 \begin{lem}
 \label{lem.5}
 For each $k=1,2,\ldots$,
 \be
 \label{4.5}
 \int_0^\infty (y^k-k!)e^{-y} F_T(y) dy = \E \Big[a_k(T)-k! e^{-T}\Big],
 \ee
 where $a_k$ is given by \eqref{4.3}.
 \end{lem}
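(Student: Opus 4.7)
The plan is to rewrite both terms on the right-hand side as integrals over the same range $(T,\infty)$ and then apply Fubini--Tonelli to convert the resulting expectation into an integral against $F_T$.

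First I would observe that, by the defining formula \eqref{4.3},
\[
a_k(t) = \int_t^{\infty} u^k e^{-u}\, du, \qquad k!\,e^{-t} = \int_t^{\infty} k!\, e^{-u}\, du,
\]
so that
\[
a_k(T) - k!\,e^{-T} = \int_T^{\infty} (u^k - k!)\, e^{-u}\, du
= \int_0^{\infty} (u^k - k!)\, e^{-u}\, I(u > T)\, du.
\]
Taking expectations and interchanging $\E$ with the $du$-integral, the right-hand side becomes
\[
\int_0^{\infty} (u^k - k!)\, e^{-u}\, \Pr(T < u)\, du
= \int_0^{\infty} (u^k - k!)\, e^{-u}\, F_T(u-)\, du.
\]
Because $F_T$ has at most countably many discontinuities, $F_T(u-) = F_T(u)$ for almost all $u > 0$, so the last integral equals the left-hand side of \eqref{4.5}.

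The only point requiring care is the interchange of $\E$ and $\int_0^\infty du$, since the integrand changes sign at $u = (k!)^{1/k}$. To justify it I would apply Fubini's theorem in its absolute form, noting that
\[
\int_0^{\infty}\!\!\int_{[0,\infty)} |u^k - k!|\, e^{-u}\, I(u>t)\, dF_T(t)\, du
\le \int_0^{\infty} |u^k - k!|\, e^{-u}\, du < \infty,
\]
which is finite because $u \mapsto |u^k - k!|\,e^{-u}$ is integrable on $(0,\infty)$. With absolute integrability established, the Fubini--Tonelli interchange is legitimate and the identity \eqref{4.5} follows immediately.

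I do not expect any genuine obstacle here; the step that deserves the most attention is merely the sign-change bookkeeping in the Fubini justification, and splitting $(u^k-k!)$ into its positive and negative parts if one prefers to stay within Tonelli would give an equally short argument.
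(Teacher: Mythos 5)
Your proof is correct. The paper itself dispatches this lemma in one line: it substitutes $x=0$ into Lemma \ref{lem.3}, which gives $\int_0^\infty y^k e^{-y}F_T(y)\,dy=\E a_k(T)$ for every $k\ge 0$ (using $F_T(0)=0$), and then subtracts $k!$ times the $k=0$ case, noting $a_0(t)=e^{-t}$ and that each integral is finite since $F_T\le 1$. You instead give a direct, self-contained argument: write $a_k(T)-k!\,e^{-T}=\int_0^\infty (u^k-k!)e^{-u}I(u>T)\,du$, apply Fubini using the bound $\Pr(T<u)\le 1$ to get $\int_0^\infty(u^k-k!)e^{-u}F_T(u-)\,du$, and replace $F_T(u-)$ by $F_T(u)$ Lebesgue-almost everywhere. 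This is essentially the same mechanism the paper uses to prove Lemma \ref{lem.3} itself (there via the layer-cake representation $\E Y=\int_0^\infty\Pr(Y>y)\,dy$ applied to $Y=a_k(T)I(T>x)$, which is Fubini in disguise), so your route buys self-containedness at the cost of not reusing the already-established Lemma \ref{lem.3}; the paper's route is shorter given that lemma but requires the small extra observation that the two integrals being subtracted are individually finite. All the delicate points --- the sign change at $u=(k!)^{1/k}$, the absolute integrability, and the a.e.\ identification of $F_T(u-)$ with $F_T(u)$ --- are handled correctly in your write-up.
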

 \begin{proof}
 Substitute $x=0$ in Lemma \ref{lem.3} and observe that $a_k(0)=k!$,
 $a_0(t)=e^{-t}$ and $F_T(0)=0$.
 \end{proof}

 \begin{lem}
 \label{lem.6}
 For each $k=1,2,\ldots$,
 \be
 \label{4.6}
 \int_0^\infty (y^k-k!)e^{-y}H(y) dy = k! \sum_{j=0}^{k-1} \frac{\E T^j}{(j+1)!}.
 \ee
 \end{lem}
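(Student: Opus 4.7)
The plan is to recast $H$ as a Stieltjes integral against $F_T$ and then apply Fubini. First, I would reinterpret the two pieces of \eqref{4.2} probabilistically. For $y\geq 1$, the identity $e[F_T(y-)-F_T(1-)]=\E[e\,I(1\leq T<y)]$ is immediate, and swapping the $dx$-integration with the expectation in the second term (Tonelli), using the antiderivative $e^x/x$ of $(x-1)e^x/x^2$, produces $\E[(e^T/T-e)\,I(1\leq T<y)]$. The $\pm e$ terms cancel, yielding
\[
 H(y)=\E\!\left[\frac{e^T}{T}\,I(1\leq T<y)\right],\qquad y\geq 1,
\]
with the symmetric formula $H(y)=-\E[(e^T/T)\,I(y\leq T<1)]$ for $0<y\leq 1$. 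Setting $\psi_y(t):=I(1\leq t<y)-I(y\leq t<1)$, both cases collapse into
\[
 H(y)=\int_{(0,\infty)} \frac{e^t}{t}\,\psi_y(t)\,dF_T(t),\qquad y>0.
\]

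Next, substituting into the left-hand side of \eqref{4.6} and invoking Fubini,
\[
 \int_0^\infty (y^k-k!)\,e^{-y}H(y)\,dy
 = \int_{(0,\infty)} \frac{e^t}{t}\left[\int_0^\infty (y^k-k!)\,e^{-y}\,\psi_y(t)\,dy\right] dF_T(t).
\]
A direct evaluation of the inner integral (using $\int_0^\infty (y^k-k!)e^{-y}dy=0$ to relate the $t\leq 1$ case to the $t\geq 1$ case) gives $a_k(t)-k!e^{-t}$ for every $t>0$. By \eqref{4.3} this equals $k!e^{-t}\sum_{j=1}^k t^j/j!$, so after multiplication by $e^t/t$ the integrand becomes $k!\sum_{j=0}^{k-1}t^j/(j+1)!$; integrating termwise against $dF_T$ delivers the required $k!\sum_{j=0}^{k-1}\E T^j/(j+1)!$.

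The main technical obstacle is the Fubini justification. For $t\geq 1$, the absolute inner integral is bounded by $a_k(t)+k!e^{-t}$, so after multiplying by $e^t/t$ it becomes $2k!/t+k!\sum_{j=0}^{k-1}t^j/(j+1)!$, a polynomial in $t$ of degree $k-1$ plus a bounded correction, which is $F_T$-integrable by the moment hypothesis on $T$. For $t\leq 1$, the absolute inner integral equals $a_k(t)-k!e^{-t}\leq k!(e-1)t$, so $(e^t/t)[a_k(t)-k!e^{-t}]$ is uniformly bounded by $k!(e-1)e$ on $(0,1]$. Hence the full double integral converges absolutely, Fubini applies, and the computation above concludes the proof.
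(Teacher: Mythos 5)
Your proof is correct, but it takes a genuinely different route from the paper's. The paper works directly with \eqref{4.2}: it splits $\int_0^\infty$ at $y=1$ and at $\beta_k=(k!)^{1/k}$ according to the signs of $H$ and of $y^k-k!$, producing six sub-integrals that are recombined into $eJ_1+J_2+J_3$, each of which is then evaluated via Lemma \ref{lem.3} and Lemma \ref{lem.5} with its own Tonelli/Fubini justification. You instead first establish the unified Stieltjes representation $H(y)=\E\big[(e^T/T)\,\psi_y(T)\big]$ with $\psi_y(t)=I(1\leq t<y)-I(y\leq t<1)$ (a correct consequence of \eqref{4.2}, Tonelli on the non-negative integrands, and the antiderivative $e^x/x$ of $(x-1)e^x/x^2$; the atom at $T=1$ is handled consistently since $e^t/t-e$ vanishes there), and then perform a single Fubini interchange. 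Your inner $dy$-integral equals $a_k(t)-k!e^{-t}$ for \emph{every} $t>0$ -- the same quantity the paper extracts separately inside $J_2$ and $J_3$ -- and your absolute-integrability bounds ($2k!/t+k!\sum_{j=0}^{k-1}t^j/(j+1)!$ for $t\geq1$, a uniform bound $k!(e-1)e$ for $t\leq1$) mirror the paper's. What your approach buys is economy: the sign-splitting at $\beta_k$ disappears, Lemma \ref{lem.5} is subsumed rather than invoked, and the representation of $H$ as an expectation is reusable (it yields Lemma \ref{lem.cT} and the monotonicity in Lemma \ref{lem.1} essentially for free). The paper's bookkeeping, by contrast, never has to isolate that representation and stays closer to the defining formula \eqref{4.2}.
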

 \begin{proof}
 Set $\beta_k:=k!^{1/k}$ so that $1=\beta_1<\beta_2<\cdots\to\infty$, as
 $k\to\infty$, and note that the integral in \eqref{4.6} is finite --
 see Lemma \ref{lem.4}. Clearly, $y^k>k!$ for $y\in (\beta_k,\infty)$ and
 $y^k<k!$ for $y\in(0,\beta_k)$. We split the integral
 in \eqref{4.6}  as follows:
 \begin{eqnarray*}
  \int_0^{1} (k!-y^k)e^{-y}(-H(y)) dy-
 \int_{1}^{\beta_k} (k!-y^k) e^{-y} H(y) dy \\
 + \int_{\beta_k}^\infty (y^k-k!)e^{-y}H(y) dy
 =I_1-I_2+I_3.
 \end{eqnarray*}
 Now we compute these three integrals. From \eqref{4.2},
 \begin{eqnarray*}
 I_1
 &=& e \int_0^{1} (k!-y^k)e^{-y}
 \Big[F_T(1-)-F_T(y)\Big] dy
 \\
 &&+
 \int_0^{1}(k!-y^k)e^{-y}\int_{y}^{1} \frac{1-x}{x^2}
 e^x \Big[F_T(x)-F_T(y)\Big] dx dy=eI_{11}+I_{12}.
 \end{eqnarray*}
 Similarly,
 \begin{eqnarray*}
 I_2
 &=& e \int_1^{\beta_k} (k!-y^k)e^{-y}
 \Big[F_T(y)-F_T(1-)\Big] dy
 \\
 &&+
 \int_1^{\beta_k}(k!-y^k)e^{-y}\int_1^{y} \frac{x-1}{x^2}
 e^x \Big[F_T(y)-F_T(x)\Big] dx dy=eI_{21}+I_{22},
 \end{eqnarray*}
 and, finally,
 \begin{eqnarray*}
 I_3
 &=& e \int_{\beta_k}^\infty (y^k-k!)e^{-y}
 \Big[F_T(y)-F_T(1-)\Big] dy
 \\
 &&+
 \int_{\beta_k}^\infty (y^k-k!)e^{-y}\int_1^{y} \frac{x-1}{x^2}
 e^x \Big[F_T(y)-F_T(x)\Big] dx dy=eI_{31}+I_{32}.
 \end{eqnarray*}
 The above calculation shows that
 \[
 e (I_{11}-I_{21}+I_{31})=
 eF_T(1-) \int_{0}^{\infty} (k!-y^k) e^{-y}  dy
 +e \int_0^\infty (y^k-k!)e^{-y} F_T(y) dy.
 \]
 Similarly,
 \begin{eqnarray*}
 I_{12}-I_{22}+I_{32}
 &=&
 \int_0^{1}(k!-y^k)e^{-y}
 \int_{y}^{1} \frac{1-x}{x^2}
 e^x \Big[F_T(x)-F_T(y)\Big]  dx dy
 \\
 &&
 +
 \int_{1}^\infty (y^k-k!)e^{-y}\int_1^{y} \frac{x-1}{x^2}
 e^x \Big[F_T(y)-F_T(x)\Big]  dx dy.
 \end{eqnarray*}
 Observing that $\int_{0}^{\infty} (k!-y^k) e^{-y}  dy=0$, we
 finally obtain
 \be
 \label{4.7}
 \int_0^\infty (y^k-k!) e^{-y}H(y) dy =e J_1+J_2+J_3,
 \ee
 where
 \begin{eqnarray*}
 J_1&=& \int_0^\infty (y^k-k!)e^{-y}
 F_T(y) dy,
 \\
 J_2&=& \int_0^{1}
 \int_{y}^{1} (k!-y^k)e^{-y}\frac{1-x}{x^2}
 e^x \Big[F_T(x)-F_T(y)\Big] dx dy,
 \\
 J_3&=&
 \int_{1}^\infty \int_1^{y} (y^k-k!)e^{-y}\frac{x-1}{x^2}
 e^x \Big[F_T(y)-F_T(x)\Big] dx dy.
 \end{eqnarray*}
 The integrand in $J_2$ is non-negative, so we can change
 the order of integration.
 In order to justify that this is also permitted for $J_3$,
 we compute
 \begin{eqnarray*}
 \int_{1}^\infty \int_1^{y} \left|(y^k-k!)e^{-y}\frac{x-1}{x^2}
 e^x \Big[F_T(y)-F_T(x)\Big]\right|  dx dy
 \ \ \ \ \ \ \ \ \ \ \ \ \ \ \ \ \ \ \ \ \ \ \ \
 \ \ \ \ \ \ \ \ \ \ \ \ \ \ \ \ \ \ \ \ \ \ \ \
 \mbox{}
 \\
 \leq
 \int_{1}^\infty \frac{x-1}{x^2}
 e^x \int_x^{\infty} (y^k+k!)e^{-y} \Big[F_T(y)-F_T(x)\Big] dy dx
 \ \ \ \ \ \ \ \ \ \ \ \ \ \ \ \
 \ \ \ \ \ \ \ \ \ \ \ \ \ \ \ \ \ \ \ \ \ \ \ \
 \mbox{}
 \\
 =
 \int_{1}^\infty \frac{x-1}{x^2}
 e^x \int_{(x,\infty)} \Big[a_k(t)+k! a_0(t)\Big] dF_T(t) dx
 \ \ \ \ \ \ \ \ \ \ \ \ \mbox{(Lemma \ref{lem.3})}
 \\
 =
 \int_{(1,\infty)} \Big[a_k(t)+k! a_0(t)\Big]
 \left(\frac{e^t}{t}-e\right)
 \
 dF_T(t)
 \ \ \ \ \ \ \ \ \ \ \ \ \ \ \ \ \
 \ \ \ \ \ \ \ \ \ \ \ \
 \mbox{(Tonelli)}
 \\
 \leq
 \int_{(1,\infty)} \Big[a_k(t)+k! a_0(t)\Big]
 \frac{e^t}{t}
 dF_T(t)<\infty,
 \ \ \ \ \ \ \ \ \ \ \ \ \ \ \ \ \
 \ \ \ \ \ \ \ \ \ \ \ \
  \ \ \ \ \ \ \ \ \ \ \
   \ \ \
 \mbox{}
 \end{eqnarray*}
 because, for $t>1$,
 \[
 \Big[a_k(t)+k! a_0(t)\Big]
 \frac{e^t}{t}=\frac{2 k!}{t}+k! \sum_{j=0}^{k-1} \frac{t^{j}}{(j+1)!}
 \leq 2 k! + k! \sum_{j=0}^{k-1} \frac{t^{j}}{(j+1)!},
 \]
 and $T$ has finite moments of any order. Thus,
 \begin{eqnarray*}
 J_2&=& \int_0^{1}
 \frac{1-x}{x^2}
 e^x  \int_{0}^{x}(k!-y^k)e^{-y} \Big[F_T(x)-F_T(y)\Big] dy dx,
 \\
 J_3&=&
 \int_{1}^\infty \frac{x-1}{x^2}
 e^x \int_x^{\infty} (y^k-k!)e^{-y} \Big[F_T(y)-F_T(x)\Big] dy dx.
 \end{eqnarray*}
 The inner integral in $J_3$
 equals to
 $
 \int_{(x,\infty)} \big[a_k(t)-k!e^{-t}\big] dF_T(t)
 $; see Lemma \ref{lem.3}.
 Since $a_k(t)-k! e^{-t}=k! e^{-t} \sum_{j=1}^k t^j/j!$,
 we obtain (after changing the order of integration once again)
 \begin{eqnarray}
 J_3&=& k! \int_{(1,\infty)} \left(e^{-t}\sum_{j=1}^k
 \frac{t^j}{j!}\right)\left(\frac{e^t}{t}-e\right) dF_T(t)
 \nonumber
 \\
 \label{4.8}
 &=& k! \int_{(1,\infty)} \sum_{j=0}^{k-1}
 \frac{t^j}{(j+1)!} dF_T(t) -e k!
 \int_{(1,\infty)} e^{-t}\sum_{j=1}^k
 \frac{t^j}{j!} dF_T(t).
 \end{eqnarray}
 Next, we make similar calculations for the inner integral in $J_2$.
 We have
 \begin{eqnarray*}
 &&
 \int_{0}^{x}(k!-y^k)e^{-y} \Big[F_T(x)-F_T(y)\Big] dy
 \\
 &&
 =\int_{0}^{\infty}(y^k-k!)e^{-y} \Big[F_T(y)-F_T(x)\Big] dy
 -\int_{x}^{\infty}(y^k-k!)e^{-y} \Big[F_T(y)-F_T(x)\Big] dy
 \\
 &&
 =\int_{0}^{\infty} (y^k-k!) e^{-y} F_T(y) dy
 -\int_{(x,\infty)} \Big[a_k(t)-k! e^{-t}\Big] dF_T(t)
 \\
 &&
 =\E\Big[a_k(T)-k! e^{-T}\Big]-\int_{(x,\infty)}
 \Big[a_k(t)-k! e^{-t}\Big] dF_T(t)
 \\
 &&
 =
 \int_{(0,x]}
 \Big[a_k(t)-k! e^{-t}\Big] dF_T(t),
 \end{eqnarray*}
 where we made use of Lemmas \ref{lem.3} and \ref{lem.5}
 and the fact that $\int_{0}^{\infty}(y^k-k!)e^{-y}  dy
 =0=a_k(0)-k! e^{-0}$. Therefore,
 \begin{eqnarray}
 J_2&=&-e\int_{(0,1]} \Big[a_k(t)-k! e^{-t}\Big] dF_T(t)+
 \int_{(0,1]} \frac{e^t}{t}\Big[a_k(t)-k! e^{-t}\Big] dF_T(t)
 \nonumber
 \\
 &=&
 \label{4.9}
 -e k! \int_{(0,1]} e^{-t} \sum_{j=1}^k \frac{t^j}{j!}\  dF_T(t)+
 k! \int_{(0,1]} \sum_{j=0}^{k-1} \frac{t^j}{(j+1)!} dF_T(t).
 \end{eqnarray}
 Finally, from Lemma \ref{lem.5},
 \be
 \label{4.10}
 e J_1=e \int_{(0,\infty)} \Big[a_k(t)-k! e^{-t}\Big]  dF_T(t)
 =e k! \int_{(0,\infty)} e^{-t} \sum_{j=1}^k \frac{t^j}{j!}  dF_T(t).
 \ee
 Combining \eqref{4.8}--\eqref{4.10} we obtain
 \begin{eqnarray*}
 e J_1+J_2+J_3=k! \int_{(0,1]} \sum_{j=0}^{k-1} \frac{t^j}{(j+1)!} dF_T(t)
 +
 k! \int_{(1,\infty)} \sum_{j=0}^{k-1}
 \frac{t^j}{(j+1)!} dF_T(t),
 \end{eqnarray*}
 and from \eqref{4.7} we conclude \eqref{4.6}.
 \end{proof}

 \begin{lem}
 \label{lem.cT}
 $\int_0^{\infty} e^{-y}H(y) dy=c_T-e F_T(1-)$, where $c_T$ is as in
 \eqref{cT} and $H$ as in \eqref{H}.
 \end{lem}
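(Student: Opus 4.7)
The plan is to compute $I := \int_0^\infty e^{-y} H(y)\, dy$ directly by splitting the integration interval at $y=1$ and inserting the piecewise formula \eqref{4.2}, then checking that the answer matches $c_T - e F_T(1-)$ with $c_T$ given by \eqref{cT}.

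First, collect the ``linear'' pieces of the form $\pm e[F_T(y-)-F_T(1-)]$ integrated against $e^{-y}$ on $(0,1]$ and on $[1,\infty)$. Elementary integration, together with the identity $\int_0^\infty e^{-y} F_T(y)\, dy = \E e^{-T}$ (immediate from Fubini, or equivalently from Lemma \ref{lem.3} at $k=0$, $x=0$, since $a_0(t)=e^{-t}$ and $F_T(0)=0$), shows that these pieces contribute exactly $-e F_T(1-) + e\,\E e^{-T}$.

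Next, treat the two ``double'' pieces
\[
A := \int_0^1 e^{-y} \int_y^1 \frac{1-x}{x^2} e^x [F_T(x)-F_T(y-)]\, dx\, dy, \quad
B := \int_1^\infty e^{-y} \int_1^y \frac{x-1}{x^2} e^x [F_T(y-)-F_T(x)]\, dx\, dy,
\]
whose net contribution is $-A+B$. Tonelli applies to $A$ directly by non-negativity; for $B$ the interchange is justified by the same type of bound used in the proof of Lemma \ref{lem.6}. After swapping orders, the inner integrals collapse via \eqref{m} (for $A$) and via Lemma \ref{lem.3} with $k=0$ (for $B$) to integrals against $dF_T$. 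A second change of order, combined with the elementary antiderivative $\frac{d}{dx}(e^x/x) = (x-1)e^x/x^2$, which gives $\int_t^1 \frac{1-x}{x^2} e^x\, dx = e^t/t - e$ and $\int_1^t \frac{x-1}{x^2} e^x\, dx = e^t/t - e$, yields
\[
-A+B \;=\; -\int_{(0,1]} \frac{e^t-1}{t}\, dF_T(t) + \int_{(1,\infty)} \frac{1}{t}\, dF_T(t) + e F_T(1) - e\,\E e^{-T}.
\]

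Summing the outer and double contributions, the $e\,\E e^{-T}$ terms cancel and
\[
I \;=\; -\int_{(0,1]} \frac{e^t-1}{t}\, dF_T(t) + \int_{(1,\infty)} \frac{1}{t}\, dF_T(t) + e F_T(1) - e F_T(1-).
\]
Expanding $c_T$ from \eqref{cT} by writing $\frac{1+et-e^t}{t} = \frac{1-e^t}{t} + e$ reproduces exactly the first three summands, so $I = c_T - e F_T(1-)$, as required. The only obstacle is bookkeeping: the distinction between $F_T(1-)$ and $F_T(1)$ (the atom at $1$ is absorbed into the $eF_T(1)$ term) and checking Fubini/Tonelli for $B$; both issues are entirely parallel to computations already carried out in Lemmas \ref{lem.2}--\ref{lem.4}.
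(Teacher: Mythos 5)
Your proposal is correct and follows essentially the same route as the paper: split at $y=1$, reduce the inner integrals to $dF_T$-integrals via \eqref{m} and Lemma \ref{lem.3} with $k=0$, interchange orders by Tonelli, and use the antiderivative $e^x/x$ of $(x-1)e^x/x^2$. The only difference is organizational (you aggregate the linear terms over all of $(0,\infty)$ and track the cancellation of $e\,\E e^{-T}$ and the atom at $1$ explicitly, whereas the paper evaluates the two half-line integrals separately), so no further comment is needed.
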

 \begin{proof} Using \eqref{4.2} and applying Lemma \ref{lem.3} (for $k=0$),
 we obtain
 \begin{align*}
 \int_1^{\infty} e^{-y}H(y) dy
 =&\Pr(T=1)+e\int_{(1,\infty)}e^{-t}dF_T(t)\\
 & +\int_{1}^{\infty}
  \frac{x-1}{x^2}e^x\int_{(x,\infty)}e^{-t}dF_T(t) dx
  =\Pr(T=1)+\E\Big[ \frac{1}{T}I(T>1)\Big].
 \end{align*}
 Similarly, from Lemma \ref{lem.2}(i), we get
 \begin{align*}
 \int_0^{1} e^{-y}(-H(y)) dy
 =&-(e-1)\Pr(T=1)+e\int_{(0,1]}(1-e^{-t})dF_T(t)\\
 & +\int_{0}^{1}
  \frac{1-x}{x^2}e^x\int_{(0,x]}(1-e^{-t})dF_T(t) dx
 \\
 =&-(e-1)\Pr(T=1)+\E\Big[ \frac{e^T-1}{T}I(T\leq 1)\Big].
 \end{align*}
 Subtracting the above equations we deduce the desired result.
 \end{proof}

 \noindent
 \begin{proof}[{Proof of Theorem} \ref{th1}{\rm (ii)} and of the second
 part of Proposition \ref{prop.phi}]
 Let ${E}$ be a standard exponential r.v.,
 and set $X_0:=H_0({E})$, where $H_0$ is given by \eqref{6.1}.
 From Lemmas \ref{lem.cT}, \ref{lem.1}, \ref{lem.4} and
 \ref{lem.6} (with $k=1$), and in view of \eqref{E},
 we see that $H_0\in{\cal H}_0$, i.e.,
 $\rho_1=0$, $\rho_2=1$, where $\{\rho_n\}_{n\geq 1}$ is the ERS
 from $X_0$. Noting that $X_0=\varphi'(T)$, see Definition \ref{def.6.4},
 we have proven that the mapping $\varphi'$ is well-defined for all
 $T\in{\cal T}$, and its values are in ${\cal H}_0$.
 Finally, from Lemma \ref{lem.6},
 \[
 \rho_{n+1}=
 \rho_{n+1}-\rho_1=\int_0^\infty \left(\frac{y^n}{n!}-1\right)e^{-y} H_0(y) dy
 = \sum_{j=0}^{n-1} \frac{\E T^j}{(j+1)!}, \ \ n=1,2,\ldots
 \]
 (the last equality is justified because $H_0-H=c$, constant),
 completing the proof.
 \end{proof}





\begin{thebibliography}{99}


 \bibitem{Ahs}
 {\sc Ahsanullah, M.} (1995).
 \textit{Record Statistics}. Nova
 Science Publishers, N.Y.

 \bibitem{Aliev98}
 {\sc Aliev, F.A.} (1998). Characterizations
 of distributions through weak records.
 {\it J.\ Appl.\ Stat.\ Science},
 {\bf 8}, 13--16.

 \bibitem{ABN1998}
 {\sc
 Arnold, B.C.; Balakrishnan, N.; Nagaraja, H.N.} (1998).
 \textit{Records}. Wiley, N.Y.


 \bibitem{BNK2019}
 {\sc
 Barakat, H.M.;
 Nigm, E.M.; Khaled, O.M.} (2019).
 {\it  Statistical Techniques for Modelling Extreme Value Data
 and Related Applications}. Cambridge
 Scholars Publishing, U.K.


 \bibitem{DD07}
 {\sc Danielak, K.; Dembi\'{n}ska, A.} (2007).
 On characterizing discrete distributions via conditional expectations of
 weak record values.
 {\it Metrika}, {\bf 66}(2), 129--138.


 \bibitem{DW00}
 {\sc Dembi\'{n}ska, A.; Wesolowski, J.} (2000).
 Linearity of regression
 for non-adjacent record values. {\it J.\ Statist.\ Plann.\ Inference},
 {\bf 90}, 195--205.

 \bibitem
  {EKM1997}
    {\sc Embrechts, P.; Kl\"{u}ppelberg, C; Mikosch, T.}\ (1997).
    \textit{Modelling Extremal Events.}
    Springer-Verlag, Berlin.



 \bibitem
 {Hausdorff1921}
    {\sc Hausdorff, F.}\ (1921).
    Summationmethoden und momentfolgen. I.
    \textit{Math.\ Zeitchrift},  {\bf9}(1), 74--109.

 \bibitem
 {HillSpruill1994}
    {\sc Hill, T.P.; Spruill, M.C.}\ (1994).
    On the relationship between convergence in
    distribution and convergence
    of expected extremes.
    \textit{Proc.\ Amer.\ Math.\ Soc.}, {\bf121}(4), 1235--1243.

  \bibitem
 {HillSpruill2000}
    {\sc Hill, T.P.; Spruill, M.C.}\ (2000).
    Erratum to ``On the relationship between convergence in
    distribution and convergence
    of expected extremes".
    \textit{Proc.\ Amer.\ Math.\ Soc.}, {\bf128}, 625--626.

  \bibitem
 {Hoeffding1953}
    {\sc Hoeffding, W.}\ (1953).
    On the distribution of the expected values of the order statistics.
    \textit{Ann.\ Math.\ Statist.},  {\bf24}(1), 93--100.

  \bibitem
 {Huang1998}
    {\sc Huang, J.S.}\ (1998).
    Sequences of expectations
    of maximum-order statistics.
    \textit{Statist.\ Probab.\ Lett.}, {\bf38}, 117--123.


 \bibitem
 {Korwar1984}
  {\sc Korwar, R.M.} (1984). On characterizing distributions
  for which the second record value has a linear regression
  on the first. {\it Sankhy\={a} B}, {\bf 46}, 108--109.

 \bibitem
 {Kadane1971}
    {\sc Kadane, J.B.}\ (1971).
    A moment problem for order statistics.
    \textit{Ann.\ Math.\ Statist.},  {\bf42},
    745--751.

 \bibitem
 {Kadane1974}
    {\sc Kadane, J.B.}\ (1974).
    A characterization of triangular arrays which are
    expectations of order statistics.
    \textit{J.\ Appl.\ Probab.},  {\bf11},
    413--416.

 \bibitem{KiBe84}
 {\sc Kirmani, S.N.U.A.; Beg, M.I.} (1984).
 On characterization of distributions
 by expected records. {\it Sankhy\={a} A}, {\bf 46}, 463--465.

 \bibitem
 {Kolo2000}
    {\sc Kolodynski, S.}\ (2000).
    A note on the sequence of expected extremes.
    \textit{Statist.\ Probab.\ Lett.}, {\bf47}, 295--300.


 \bibitem{Lin87}
 {\sc Lin, G.D.} (1987).
 On characterizations of distributions via moments of record values.
 {\it Probab.\ Th.\ Rel.\ Fields}, {\bf 74}, 479--483.

 \bibitem{LiHu87}
 {\sc
 Lin, G.D.; Huang, J.S.} (1987). A note on the sequence
 of expectations of maxima and of record values.
 {\it Sankhy\={a} A},
 {\bf 3}, 272--273.

 \bibitem{LiSto16}
 {\sc
 Lin, G.D.; Stoyanov, J.} (2016). On the moment determinacy of products
 of non-identically distributed random variables. {\it Probab.\ Math.\
 Statist.},
 {\bf 36}(1), 21--33.

 \bibitem{LoWe01}
 {\sc
 Lopez-Blazquez, F.; Wesolowski, J.} (2001). Discrete distributions
 for which the regression of the first record on the second is
 linear. {\it Test}, {\bf 10}, 121--131.



 \bibitem
 {Nag77}
  {\sc
  Nagaraja, H.N.} (1977). On a characterization based
  on record values.
 {\it Austral.\ J.\ Statist.}, {\bf 19}, 70--73.

 \bibitem
 {Nag78}
  {\sc
  Nagaraja, H.N.} (1978). On the expected values of record values.
 {\it Austral.\ J.\ Statist.}, {\bf 20}, 176--182.

 \bibitem
 {Nag88}
  {\sc
  Nagaraja, H.N.} (1988). Some characterizations of continuous
  distributions based
  on regressions of adjacent order statistics and record values.
 {\it Sankhy\={a} A}, {\bf 50}, 70--73.

 \bibitem
 {Nev01}
    {\sc
  Nevzorov, V.B.} (2001).
  \textit{Records: Mathematical Theory}. American Mathematical Society,
  Providence, RI.

 \bibitem
 {Pap01}
    {\sc Papadatos, N.} (2001).
  Distribution and expectation bounds on order statistics
  from possibly dependent variates.
  {\it Statistics and Probability Letters}, {\bf 54}, 21--31.


 \bibitem{Papad12}
 {\sc Papadatos, N.} (2012). Linear estimation of location
 and scale parameters using partial maxima.
 {\it Metrika},  {\bf 75}, 243--270.

 \bibitem{Papad17}
 {\sc Papadatos, N.} (2017). On sequences
 of expected maxima and expected ranges.
 {\it J.\ Appl.\ Probab.},  {\bf 54}, 1144--1166.

 \bibitem
 {Raq02}
  {\sc
 Raqab M.Z.} (2002).
 Characterizations of distributions based on the conditional
 expectations of record values.
 {\it Statistics \& Decisions.}\
 {\bf 20}, 309--319.

 \bibitem
 {Res73}
  {\sc
 Resnick, S.I.} (1973). Record Values and Maxima.
 {\it Ann.\ Probab.}, {\bf 1}(4), 650--662.

 \bibitem
 {Res87}
  {\sc
 Resnick, S.I.} (1987). \textit{Extreme values, regular
 variation, and point processes}. Springer, N.Y.


 \bibitem
 {Step93}
  {\sc
 Stepanov, A.V.} (1993). A characterization theorem for weak
 records.
 {\it Theory Probab.\ Appl.}, {\bf 38}, 762--764.

 \bibitem
 {Sto13}
  {\sc
  Stoyanov, J.} (2013).
  \textit{Counterexamples in Probability}. Dover Publications,
  N.Y.

 \bibitem
 {Sto13b}
  {\sc
 Stoyanov, J.} (2013). Hardy's condition in the moment
 problem for probability distributions.
 {\it Theory.\ Probab.\ Appl.} (SIAM), {\bf 57}(4),
 699–-708.

 \bibitem
 {StoTolm}
  {\sc
 Stoyanov, J.; Tolmatz, L.} (2005). Method for constructing Stieltjes
 classes for M-indeterminate probability distributions.
 {\it Appl.\ Math.\ Comput.}, {\bf 165}, 669–-685.

 \bibitem
 {TrBla85}
  {\sc Tryfos, P; Blackmore, R.}
  (1985). Forecasting records.
 {\it J.\ Am.\ Stat.\ Assoc.}, {\bf 80}, 46--50.



 \bibitem
 {Yanev12}
 {\sc
  Yanev, G.P.} (2012). Characterization of exponential distribution via
 regression of one record value on two non-adjacent record values.
 {\it Metrika}, {\bf 75}(6), 743--760.

\end{thebibliography}
\end{document}